\DeclareFontFamily{OT2}{cmr}{\hyphenchar\font45 }
\DeclareFontShape{OT2}{cmr}{m}{n}{
<5><6><7><8><9>gen*wncyr
<10><10.95><12><14.4><17.28><20.74><24.88>wncyr10}{}
\DeclareFontShape{OT2}{cmr}{b}{n}{
<5><6><7><8><9>gen*wncyb
<10><10.95><12><14.4><17.28><20.74><24.88>wncyb10}{}
\DeclareMathAlphabet{\mathcyr}{OT2}{cmr}{m}{n}
\DeclareMathAlphabet{\mathcyb}{OT2}{cmr}{b}{n}
\SetMathAlphabet{\mathcyr}{bold}{OT2}{cmr}{b}{n}
\numberwithin{equation}{section}
\newcommand{\shortmathcal}[1]{\@tfor\ch:=#1\do{
\expandafter\edef\csname c\ch\endcsname{\noexpand\mathcal{\ch}}
}}
\newcommand{\shortmathbb}[1]{\@tfor\ch:=#1\do{
\expandafter\edef\csname bb\ch\endcsname{\noexpand\mathbb{\ch}}
}}
\newcommand{\shortmathbf}[1]{\@tfor\ch:=#1\do{
\expandafter\edef\csname b\ch\endcsname{\noexpand\mathbf{\ch}}
}}
\newcommand{\shortboldsymbol}[1]{\@tfor\ch:=#1\do{
\expandafter\edef\csname bs\ch\endcsname{\noexpand\boldsymbol{\ch}}
}}
\newcommand{\shortmathfrak}[1]{\@tfor\ch:=#1\do{
\expandafter\edef\csname f\ch\endcsname{\noexpand\mathfrak{\ch}}}}
\newcommand{\shortmathscr}[1]{\@tfor\ch:=#1\do{
\expandafter\edef\csname s\ch\endcsname{\noexpand\mathscr{\ch}}}}
\newcommand{\shortmathrm}[1]{\@tfor\ch:=#1\do{
\expandafter\edef\csname r\ch\endcsname{\noexpand\mathrm{\ch}}
}}
\newcommand{\sh}{\mathbin{\mathcyr{sh}}}
\newcommand{\emp}{\varnothing}
\newcommand{\dch}{\mathrm{dch}}
\newcommand{\pr}{\mathrm{pr}}
\newcommand{\vv}[2]{\begin{pmatrix}#1\\ #2\end{pmatrix}}
\newcommand{\Li}{\mathrm{Li}}
\renewcommand{\Re}{\mathrm{Re}}
\renewcommand{\Im}{\mathrm{Im}}
\DeclareMathOperator*{\Reg}{Reg}
\DeclareMathOperator{\reg}{reg}
\theoremstyle{definition}
\newtheorem{theorem}{Theorem}[section]
\newtheorem{proposition}[theorem]{Proposition}
\newtheorem{lemma}[theorem]{Lemma}
\newtheorem{corollary}[theorem]{Corollary}
\newtheorem{definition}[theorem]{Definition}
\theoremstyle{remark}
\newtheorem{remark}[theorem]{Remark}
\newtheorem{example}[theorem]{Example}
\title{Discrete iterated integrals and cyclic sum formulas}
\author{Hanamichi Kawamura}
\address[Hanamichi Kawamura]{Department of Mathematics, Faculty of Science Division I, Tokyo University of Science, 1-3 Kagurazaka, Shinjuku-ku, Tokyo, 162-8601, Japan}
\email{1121026@ed.tus.ac.jp}
\subjclass[2020]{11M32.}
\keywords{iterated integral, discrete iterated integral, multiple polylogarithm, cyclic sum formula}
\begin{document}
\begin{abstract}
    In this paper, we consider a discrete version of iterated integrals by the naive (equally divided) Riemann sum.
    In particular, basic three formulas for usual iterated integrals are discretized.
    Moreover, we proved cyclic sum formulas for discrete iterated integrals.
    They imply the cyclic sum formula for multiple polylogarithms. 
\end{abstract}
\maketitle
\section{Introduction}
We often encounter \emph{iterated integrals} like
\[I_{\dch}(1_{0};e_{a_{1}}\cdots e_{a_{k}};(-1)_{1})=\int_{0<t_{1}<\cdots<t_{k}<1}\frac{dt_{1}}{t_{1}-a_{1}}\cdots\frac{dt_{k}}{t_{k}-a_{k}}\qquad (a_{1},\ldots,a_{k}\in\bbC\setminus(0,1)),\]
in recent researches of number theory, knot theory, mathematical physics, etc.
One of most important examples of iterated integrals is the \emph{multiple zeta value}, which has the expression as a nested sum other than the iterated integral:
\begin{equation}\label{eq:mzv_ii}
(-1)^{r}I_{\dch}(1_{0};e_{1}e_{0}^{k_{1}-1}\cdots e_{1}e_{0}^{k_{r}-1};(-1)_{1})=\zeta(k_{1},\ldots,k_{r})\coloneqq\sum_{0<n_{1}<\cdots<n_{r}}\frac{1}{n_{1}^{k_{1}}\cdots n_{r}^{k_{r}}}.
\end{equation}
General iterated integrals, denoted as $I_{\gamma}(x;w;y)$, satisfy many important formulas.
The typical examples are the following.
\begin{theorem}\label{thm:basic_ii}
    Let $\gamma$ and $\gamma'$ be paths.
    Denote by $x_{\circ}$ (resp.~$y_{\circ}$) the initial (resp.~final) point of $\circ\in\{\gamma,\gamma'\}$.
    Assume that $y_{\gamma}$ is equal to $x_{\gamma'}$.
    Then, for $a_{1},\ldots,a_{k}\in D$ and $w,w'\in\fH_{D}$, we have 
    \begin{enumerate}
        \item(Path composition formula) $\displaystyle I_{\gamma\gamma'}(x_{\gamma};e_{a_{1}}\cdots e_{a_{k}};y_{\gamma'})=\sum_{i=0}^{k}I_{\gamma}(x_{\gamma};e_{a_{1}}\cdots e_{a_{i}};y_{\gamma})I_{\gamma'}(x_{\gamma'};e_{a_{i+1}}\cdots e_{a_{k}};y_{\gamma'})$.
        \item(Reversal formula) $I_{\gamma}(x_{\gamma};e_{a_{1}}\cdots e_{a_{k}};y_{\gamma})=(-1)^{k}I_{\gamma^{-1}}(y_{\gamma};e_{a_{k}}\cdots e_{a_{1}};x_{\gamma})$.
        \item(Shuffle product formula) $I_{\gamma}(x_{\gamma};w\sh w';y_{\gamma})=I_{\gamma}(x_{\gamma};w;y_{\gamma})I_{\gamma}(x_{\gamma};w';y_{\gamma})$.
    \end{enumerate} 
\end{theorem}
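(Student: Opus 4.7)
The plan is to treat the three formulas in order, each via the standard simplex-manipulation arguments, since no discrete machinery is needed yet here.

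For (1), I would start by parametrising the composite path $\gamma\gamma'$ on $[0,1]$ so that $\gamma$ covers $[0,s]$ and $\gamma'$ covers $[s,1]$ for some intermediate $s$. The defining integral then runs over the simplex $\{0<t_{1}<\cdots<t_{k}<1\}$, which I would partition according to the largest index $i$ with $t_{i}\le s$. On each piece the variables $t_{1},\dots,t_{i}$ live on $\gamma$ and $t_{i+1},\dots,t_{k}$ live on $\gamma'$, and Fubini separates the integral into a product of two smaller iterated integrals, yielding exactly the sum on the right-hand side. The only care needed is that the boundary loci $t_{i}=s$ have measure zero and so do not contribute.

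For (2), I would use the reparametrisation that sends $\gamma$ to $\gamma^{-1}$, i.e.\ $t\mapsto 1-t$ on the parameter interval. This substitution simultaneously reverses the ordering $0<t_{1}<\cdots<t_{k}<1$ to $0<1-t_{k}<\cdots<1-t_{1}<1$ and transforms each one-form $dt_{j}/(t_{j}-a_{j})$ into $-d(1-t_{j})/((1-t_{j}) - (1-a_{j}))$ along the reversed path, with $a_{j}$ interpreted on the reversed path in the obvious way. Reindexing the variables and collecting the $k$ sign changes produces the factor $(-1)^{k}$, and then the reversed word $e_{a_{k}}\cdots e_{a_{1}}$ appears.

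For (3), I would first reduce to the monomial case $w=e_{a_{1}}\cdots e_{a_{i}}$, $w'=e_{a_{i+1}}\cdots e_{a_{i+j}}$ by bilinearity. Then I would express the product $I_{\gamma}(x_{\gamma};w;y_{\gamma})\,I_{\gamma}(x_{\gamma};w';y_{\gamma})$ as a single integral over the product simplex $\{0<s_{1}<\cdots<s_{i}<1\}\times\{0<u_{1}<\cdots<u_{j}<1\}$, and decompose this product, up to the measure-zero diagonals where some $s_{p}$ equals some $u_{q}$, into the union of the $\binom{i+j}{i}$ simplices indexed by shuffle permutations of $(s_{1},\dots,s_{i})$ with $(u_{1},\dots,u_{j})$. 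Summing the resulting integrals reproduces $I_{\gamma}(x_{\gamma};w\sh w';y_{\gamma})$ directly from the combinatorial definition of $\sh$.

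None of the three steps is technically deep; the one point that requires a little care is making the parametrisation conventions for concatenation and reversal compatible with the chosen definition of $I_{\gamma}$, so that the signs in (2) and the endpoint matching in (1) come out unambiguously. I expect that to be the main bookkeeping obstacle, rather than any analytic difficulty.
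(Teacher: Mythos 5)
The paper does not prove this theorem at all: it is quoted as a standard fact about (regularized) iterated integrals, with the reader referred to the literature (cf.\ \cite{bf24}). So there is no ``paper proof'' to compare against, and your sketch has to stand on its own. For \emph{convergent} iterated integrals (non-tangential endpoints, $w,w'\in\fH_{D}^{\gamma}$) your three arguments are the correct standard ones: splitting the simplex at the gluing parameter for (1), the substitution $t\mapsto 1-t$ for (2), and the decomposition of the product of simplices into shuffles for (3). One local slip in (2): the letters $a_{j}$ are points of $D\subseteq\bbC$ and do not transform under reparametrization of $[0,1]$; the pulled-back form is $-\gamma'(1-t)\,dt/(\gamma(1-t)-a_{j})$, so the denominator you wrote, $(1-t_{j})-(1-a_{j})$, is wrong as written (that substitution of $a_{j}\mapsto 1-a_{j}$ would give the duality relation, not the reversal formula). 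Your final conclusion --- $k$ sign changes and the reversed word with the \emph{same} letters --- is nevertheless the right one.

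The genuine gap is that the theorem as stated is about \emph{regularized} iterated integrals (Definition \ref{def:reg_ii}): $w,w'$ range over all of $\fH_{D}$, the endpoints may be tangential base points, and the gluing point $y_{\gamma}=x_{\gamma'}$ in (1) may lie in $D$. Your measure-theoretic simplex manipulations only apply on the truncated paths $\gamma_{z,1-z'}$ where everything converges; you then still have to show that the identities survive passing to the constant term $P(0)$ of the log-polynomial asymptotic expansion. For (2) and (3) this is routine bookkeeping, but for (1) it is the real content: when the intermediate point is tangential, the individual factors $I_{\gamma}(x_{\gamma};e_{a_{1}}\cdots e_{a_{i}};y_{\gamma})$ and $I_{\gamma'}(x_{\gamma'};e_{a_{i+1}}\cdots e_{a_{k}};y_{\gamma'})$ are each divergent and only defined after regularization, so the statement ``Fubini separates the integral into a product of two smaller iterated integrals'' does not literally make sense there; one has to match the $\log z$-expansions on both sides (e.g.\ via the coproduct/Hopf-algebra formalism or the comparison of regularizations in \cite{bf24}). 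As written, your proposal proves the convergent case of all three formulas but does not address the regularized case that the theorem actually asserts.
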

Recently, Maesaka, Seki and Watanabe \cite{msw24} discovered that:
\begin{theorem}[{\cite[Theorem 1.3]{msw24}}]
    Let $N,r,k_{1},\ldots,k_{r}$ be positive integers.
    Put $k\coloneqq k_{1}+\cdots+k_{r}$ and define $S\coloneqq\{0,k_{1},k_{1}+k_{2},\ldots,k_{1}+\cdots+k_{r}\}$.
    Then we have
    \[\sum_{\substack{0=n_{0}\le n_{1}\le\cdots\le n_{k}\le n_{k+1}=N\\ i\in S\implies n_{i}\neq n_{i+1}}}\left(\prod_{i\in S\setminus\{k\}}\frac{1}{N-n_{i+1}}\right)\left(\prod_{i\in\{0,\ldots,k-1\}\setminus S}\frac{1}{n_{i+1}}\right)=\sum_{0<n_{1}<\cdots<n_{r}<N}\frac{1}{n_{1}^{k_{1}}\cdots n_{r}^{k_{r}}}.\]
\end{theorem}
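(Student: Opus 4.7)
The plan is to view the left-hand side as the equally-spaced Riemann-sum approximation of the iterated integral representation of $\zeta(k_{1},\ldots,k_{r})$ given in~\eqref{eq:mzv_ii}, and then to establish the finite-$N$ equality by induction. Under the substitution $t_{i}=n_{i}/N$, $dt_{i}=1/N$, the $1$-forms $\frac{dt}{t-1}$ and $\frac{dt}{t}$ sample to $-\frac{1}{N-n}$ and $\frac{1}{n}$ at the grid point $t=n/N$, so the two products of weights appearing in the LHS are precisely the integrand of the iterated integral in~\eqref{eq:mzv_ii} evaluated on the lattice $\bigl\{1/N,\ldots,(N-1)/N\bigr\}$. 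The simplex $0<t_{1}<\cdots<t_{k}<1$ discretizes to $0\le n_{1}\le\cdots\le n_{k}\le N$, with strict inequality imposed at each junction $i\in S$ precisely so that the corresponding denominator ($N-n_{i+1}$ or $n_{i+1}$) does not vanish. Thus the LHS is a tautological Riemann-sum approximation of $\zeta(k_{1},\ldots,k_{r})$, and the surprising content of the theorem is that this approximation agrees with the truncated multiple zeta value on the nose for every finite $N$, not only in the limit $N\to\infty$.

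To prove the exact equality I would induct on the total weight $k=k_{1}+\cdots+k_{r}$. The base case $k=1$ (so $r=1$, $k_{1}=1$) reduces to $\sum_{0<n<N}(N-n)^{-1}=\sum_{0<m<N}m^{-1}$ by the change of summation variable $n\mapsto N-n$. For the inductive step I would introduce a Seki--Yamamoto-style connected sum
\[
Z\!\left(\begin{smallmatrix}k_{1},\ldots,k_{a}\\ \ell_{1},\ldots,\ell_{b}\end{smallmatrix}\,\middle|\,N\right)
\]
designed to reduce to the LHS when the lower row is empty ($b=0$) and to the truncated-MZV right-hand side when the upper row is empty ($a=0$), linked by an appropriate finite \emph{connector} bridging the two rows. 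A transport identity asserting that $Z$ is unchanged when one moves a single unit of weight from the upper row to the lower row would then, iterated block by block, carry the LHS all the way to the RHS.

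The main obstacle will be identifying the correct finite-$N$ connector. In the infinite setting of Seki--Yamamoto the connector is the beta-type factor $\frac{m!\,n!}{(m+n)!}$ and the transport identity reduces to the three-term beta identity $B(m+1,n)+B(m,n+1)=B(m,n)$; in the present finite setting with cutoff $N$ I expect the transport to be driven by the partial-fraction identity $\frac{1}{m(N-m)}=\frac{1}{N}\bigl(\tfrac{1}{m}+\tfrac{1}{N-m}\bigr)$ together with a finite combinatorial factor (plausibly an inverse binomial such as $\binom{N-1}{n}^{-1}$, or a harmonic-number analogue). An alternative route, available once the discrete path composition formula analogous to Theorem~\ref{thm:basic_ii}(1) has been established in the main body of the paper, is to condition on the boundary variables $m_{j}:=n_{s_{j}}$ with $s_{j}=k_{1}+\cdots+k_{j}$, split the LHS over the sub-intervals $[m_{j-1},m_{j}]$ using the discrete path composition, and reduce each block factor inductively; this replaces the need to invent a connector with the verification of the discrete versions of the three basic formulas of Theorem~\ref{thm:basic_ii}.
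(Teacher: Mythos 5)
This statement is not proved in the paper at all: it is quoted verbatim from Maesaka--Seki--Watanabe \cite{msw24}, and the only ``proof'' the paper offers is that citation. Judged on its own terms, your proposal has a genuine gap at its centre. The Riemann-sum interpretation and the base case $k=1$ are fine, but the entire content of the theorem lives in the inductive step, and that is exactly the part you leave open: you propose a Seki--Yamamoto-style connected sum ``linked by an appropriate finite connector'' and then concede that ``the main obstacle will be identifying the correct finite-$N$ connector,'' offering only guesses (an inverse binomial, a harmonic-number analogue). A proof that defers the identification and verification of its one essential identity is a plan, not a proof. The mechanism that actually makes such a finite-$N$ transport work is elementary telescoping in the cutoff variable, e.g.
\[
\frac{1}{N-m}-\frac{1}{N}=\sum_{0<d\le m}\Bigl(\frac{1}{N-d}-\frac{1}{N-d+1}\Bigr),
\qquad
\frac{1}{(N-m)\,n}=\frac{1}{N-m+n}\Bigl(\frac{1}{N-m}+\frac{1}{n}\Bigr),
\]
which exchanges a factor $1/(N-n_{i+1})$ for a factor $1/n_{i}$ one index at a time; this is precisely the computation that reappears in this paper in the proof of Proposition \ref{prop:transport}, where $A(m,n)=B(m,n)$ is established by writing $\frac{1}{c-\tilde m}-\frac{1}{c-xN}$ as a telescoping sum over an auxiliary index and swapping the order of summation. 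Until you write down such an identity and check that iterating it carries the left-hand side exactly onto $\sum_{0<n_{1}<\cdots<n_{r}<N}n_{1}^{-k_{1}}\cdots n_{r}^{-k_{r}}$ (including the bookkeeping of the strict inequalities at the indices in $S$), the argument does not close.

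Your fallback route via the discrete path composition formula is also circular as stated. Path composition only factors the sum over the junction variables $n_{s_{j}}$; each resulting block still carries the mixed weights $1/(N-n)$ and $1/n$ measured against the \emph{global} cutoff $N$, so ``reducing each block inductively'' is the very statement you are trying to prove at smaller depth and gives no new leverage without the transport identity above. The one fully correct ingredient you supply, the base case $\sum_{0<n<N}(N-n)^{-1}=\sum_{0<m<N}m^{-1}$, is the depth-one instance where no transport is needed.
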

Comparing this fact and \eqref{eq:mzv_ii}, we can consider the left-hand side above as a \emph{discretization} of $I_{\dch}(1_{0};e_{1}e_{0}^{k_{1}-1}\cdots e_{1}e_{0}^{k_{r}-1};(-1)_{1})$
In this paper, we consider a discretization $\Delta_{N,\gamma}(s(\gamma);w;t(\gamma))$ for more general iterated integrals and associated notions.
The basic three formulas for iterated integrals, which are known as the \emph{path composition formula}, \emph{reversal formula} and \emph{shuffle product formula} respectively, are discretized:
\begin{theorem}
    Let $\gamma$ and $\gamma'$ be composable discrete paths.
    Then, for $a_{1},\ldots,a_{k}\in D$ and a $\gamma$-shuffleable pair $(w,w')\in(\fH_{D})^{2}$, we have 
    \begin{enumerate}
        \item(Path composition formula, Corollary \ref{cor:path_composition}) \[\Delta_{N,\gamma\gamma'}(s(\gamma);e_{a_{1}}\cdots e_{a_{k}};t(\gamma'))=\sum_{i=0}^{k}\Delta_{N,\gamma}(s(\gamma);e_{a_{1}}\cdots e_{a_{i}};t(\gamma))\Delta_{N,\gamma'}(s(\gamma');e_{a_{i+1}}\cdots e_{a_{k}};t(\gamma')).\]
        \item(Reversal formula, Theorem \ref{thm:reversal}) \[\Delta_{N,\gamma}(s(\gamma);e_{a_{1}}\cdots e_{a_{k}};t(\gamma))=(-1)^{k}\Delta_{N,\gamma^{-1}}(t(\gamma);e_{a_{k}}\cdots e_{a_{1}};s(\gamma)).\]
        \item(Asymptotic shuffle product formula, Theorem \ref{thm:spf}) \[\Delta_{N,\gamma}(s(\gamma);w;t(\gamma))\Delta_{N,\gamma}(s(\gamma);w';t(\gamma))=\Delta_{N,\gamma}(s(\gamma);w\sh w';t(\gamma))+O(N^{-1}\log^{\bullet}N),\qquad\text{as }N\to\infty.\]
    \end{enumerate} 
\end{theorem}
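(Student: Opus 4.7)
The three assertions split naturally into two combinatorial identities that I expect to hold at each finite $N$ (parts (1) and (2)) and one genuinely analytic statement (part (3)). I would dispose of the first two by direct manipulation of the sum defining $\Delta_{N,\gamma}$ and save the real work for the asymptotic shuffle.

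For the path composition formula~(1), I would expand the left-hand side as a sum over weakly increasing tuples of lattice points traversing the concatenated discrete path, and observe that every such tuple is uniquely determined by choosing a split index $i\in\{0,\ldots,k\}$ together with a sub-tuple of length $i$ on $\gamma$ and a sub-tuple of length $k-i$ on $\gamma'$. Since the weight attached to the $j$-th slot depends only on its own index and on $a_{j}$ together with the local datum of the segment, the product of weights factors, and summation over $i$ yields exactly the right-hand side. For the reversal formula~(2), I would apply the involution $n_{j}\mapsto N-n_{k+1-j}$, which is a bijection between the index sets for $\gamma$ and for $\gamma^{-1}$; each slot weight transforms with a sign (coming, morally, from $dt\mapsto-dt$ under orientation reversal), and the $k$ signs accumulate to the announced factor $(-1)^{k}$.

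The asymptotic shuffle~(3) is the real obstacle. The product $\Delta_{N,\gamma}(s(\gamma);w;t(\gamma))\cdot\Delta_{N,\gamma}(s(\gamma);w';t(\gamma))$ is a double sum over pairs of weakly increasing tuples $(\boldsymbol{n},\boldsymbol{n}')$ with no interaction between the two chains. I would split this double sum according to whether the entries of $\boldsymbol{n}$ together with those of $\boldsymbol{n}'$ are pairwise distinct, and identify the distinct part with $\Delta_{N,\gamma}(s(\gamma);w\sh w';t(\gamma))$ via the standard shuffle bijection: a pair of chains with all entries distinct corresponds uniquely to a single chain equipped with a shuffle permutation that records the origin of each slot. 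Everything else is a \emph{diagonal} contribution in which at least one coincidence $n_{i}=n'_{j}$ occurs, and the content of (3) is that this contribution decays like $O(N^{-1}\log^{\bullet}N)$.

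The heart of the argument is therefore this diagonal estimate. I would bound it by summing over the positions of the coincidences: a single fused slot produces a factor of the schematic shape $\sum_{0\le m\le N}1/((m-a_{i}N)(m-a_{j}N))$, which a partial-fraction calculation shows to be $O(N^{-1}\log N)$ uniformly for $a_{i},a_{j}$ away from the singular locus of $\gamma$, while each of the remaining $\deg w+\deg w'-2$ slots contributes an iterated harmonic sum of size $O(\log N)$. Combining these gives an error of size $O(N^{-1}\log^{\deg w+\deg w'-1}N)$ for a single fusion, and additional fusions only improve the decay. The technical subtleties I foresee are (i) the uniformity of the above bounds when the parameters $a_{j}$ approach critical values on $\gamma$ --- presumably the point of the $\gamma$-shuffleability hypothesis --- and (ii) the careful bookkeeping of the boundary cases in which indices already coincide inside a single chain, since such repetitions are already present in $\Delta_{N,\gamma}(s(\gamma);w\sh w';t(\gamma))$ and should cancel out of the diagonal error rather than be double-counted.
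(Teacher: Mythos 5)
Your overall strategy coincides with the paper's on all three points: (1) is the decomposition of the index set of $\gamma\gamma'$ at the junction (Lemma \ref{lem:path_composition}); (2) is a sign-flipping substitution on the provision after reducing to single pieces via (1); and (3) splits the double sum into an off-diagonal part matched with $\Delta_{N,\gamma}(s(\gamma);w\sh w';t(\gamma))$ plus a diagonal remainder to be estimated. Two caveats on (2): the paper's discrete paths also contain \emph{singular} pieces (loops around points of $D$, with weights $\pm\theta/N$), for which reversal is not a substitution but follows from the closed Pochhammer evaluation of Proposition \ref{prop:singular}; and for a straight piece $(x,y)$ the substitution matching the definition of the inverse piece is $n\mapsto -n$, not $n\mapsto N-n_{k+1-j}$. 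These are minor.

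The genuine gap is in your diagonal estimate for (3). Your bound $\sum_{m}1/((m-a_iN)(m-b_jN))=O(N^{-1}\log N)$ holds when $a_i\neq b_j$ (partial fractions) or when $a_i=b_j$ is bounded away from the endpoints of the piece (then $|m-a_iN|\ge\rho N$); but when $a_i=b_j=x$ (or $=y$), the fused slot contributes $\sum_{0<m<(y-x)N}m^{-2}$, which converges to a nonzero constant, so your claim that ``additional fusions only improve the decay'' fails there: a single such fusion gives only $O(\log^{\bullet}N)$ overall, not $O(N^{-1}\log^{\bullet}N)$. You correctly suspect this is where $\gamma$-shuffleability enters, but the resolving argument is absent, and it is precisely the nontrivial step. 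The paper's Lemma \ref{lem:convergence}(4)--(5) supplies it: shuffleability guarantees a slot $i$ with value $n_i\le n_{i'}$ whose letter differs from $x$; since $xN<n_i\le n_{i'}$ gives $|n_{i'}-xN|\ge|n_i-xN|$, one of the two factors $1/(n_{i'}-xN)$ can be transferred from the fused slot $i'$ to slot $i$, producing a fused slot with two \emph{distinct} letters, to which partial fractions apply and yield the $1/N$ gain. Two further points you leave open: the off-diagonal part is not literally $\Delta_{N,\gamma}(s(\gamma);w\sh w';t(\gamma))$ (the latter still contains cross-coincidences), so the discrepancy must itself be expressed as fused-slot sums and absorbed into the error by the same lemma --- you flag this but do not carry it out; and the passage from a single piece to a general composite discrete path requires the path composition formula together with the compatibility of the deconcatenation coproduct with $\sh$ (Proposition \ref{prop:hopf}), which your argument does not address.
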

In the second main theorem, we evaluate a cyclic sum on discrete iterated integrals along a straight path.
\begin{theorem}[$=$ Theorem \ref{thm:csf}]
    Let $w=e_{a_{1}}\cdots e_{a_{k}}$ ($k\ge 2$) be an element of $\fH_{D}$ and $\gamma=(x,y)$ be a positive straight piece.
    Then we have
    \begin{multline}
        \sum_{\substack{1\le i\le k\\ a_{i}\neq x}}\Delta_{N,\gamma}((x,+);e_{a_{i}}\cdots e_{a_{k}}e_{a_{1}}\cdots e_{a_{i-1}}e_{x};(y,-))+\sum_{\substack{1\le i\le k\\ a_{i}\neq y}}\Delta_{N,\gamma}((x,+);e_{y}e_{a_{i+1}}\cdots e_{a_{k}}e_{a_{1}}\cdots e_{a_{i-1}}e_{x})\\
        =\sum_{\substack{1\le i\le k\\ a_{i}\in\{x,y\}}}\Delta_{N,\gamma}((x,+);e_{a_{i}}\cdots e_{a_{k}}e_{a_{1}}\cdots e_{a_{i}};(y,-))-\sum_{i=1}^{k}\Delta_{N,\gamma}((x,+);e_{a_{1},\ldots,a_{i},a_{i},\ldots,a_{k}};(y,-)).
\end{multline}
\end{theorem}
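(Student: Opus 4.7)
The plan is to reduce the identity to a telescoping identity at the level of the explicit nested sums defining $\Delta_{N,\gamma}$ on a positive straight piece. First, I would unfold $\Delta_{N,\gamma}((x,+);e_{b_{1}}\cdots e_{b_{\ell}};(y,-))$ as a Riemann-sum analogue of the iterated integral: a sum over tuples $0\le n_{1},\ldots,n_{\ell}\le N$ satisfying order constraints (and distinctness constraints at indices $j$ where $b_{j}\in\{x,y\}$), with summand a product of reciprocals of the linear forms $\tfrac{n_{j}}{N}(y-x)+x-b_{j}$. Once each of the four sums in the statement is written in this form, all four are sums of the same shape, indexed by $(k+1)$-tuples of integers; the problem becomes an equality of rational-function sums.

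The second step is the key partial fraction identity
\[\frac{1}{(n-\alpha)(n-\beta)}=\frac{1}{\alpha-\beta}\left(\frac{1}{n-\beta}-\frac{1}{n-\alpha}\right)\qquad(\alpha\neq\beta),\]
applied to the pair of consecutive factors associated to the letter $a_{i}$ and its neighbor in the appended/duplicated word. Schematically, for each $i$ with $a_{i}\notin\{x,y\}$, the factor $\frac{1}{(n-a_{i})(n-x)}$ (respectively $\frac{1}{(n-y)(n-a_{i})}$) splits into two pieces; one piece reassembles, after cyclic rotation of the word, into the corresponding term for index $i-1$ (or $i+1$), while the other piece matches the "duplicated letter" sum on the right-hand side. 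Thus the cyclic sum telescopes around the cycle $1\to 2\to\cdots\to k\to 1$.

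Third, after the telescoping, the surviving uncancelled contributions are exactly the two sums on the right-hand side: the degenerate cases $a_{i}\in\{x,y\}$ — where the partial fraction identity breaks down and must be replaced by the identity $\frac{1}{(n-a_{i})^{2}}\cdot(\cdots)$ — produce the first right-hand sum, while the "other halves" of every partial fraction assemble to give the duplicated-letter sum $\sum_{i}\Delta_{N,\gamma}((x,+);e_{a_{1},\ldots,a_{i},a_{i},\ldots,a_{k}};(y,-))$ with the appropriate sign. The two left-hand sums correspond to which side of the cycle we chose to pair with the letter $a_{i}$: appending $e_{x}$ at the tail corresponds to pairing with the $(x,+)$ boundary, prepending $e_{y}$ to pairing with the $(y,-)$ boundary, which is why only the conditions $a_{i}\neq x$ and $a_{i}\neq y$ (respectively) appear on the two sums.

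The main obstacle will be the combinatorial bookkeeping: keeping track of which index conditions each of the four sums imposes, making sure that the telescoping in the cyclic direction matches a rotation of the indexing tuple by one position, and carefully handling the degenerate cases where $a_{i}$ coincides with an endpoint. A convenient reformulation, which I expect to make the cancellation transparent, is to introduce a single "augmented" sum over $(k+1)$-tuples of integers (without the distinctness restrictions) and interpret each of the four sums in the theorem as a specialization or residue of this augmented object; the telescoping identity then becomes a formal rearrangement of terms, and one only needs to verify that the Riemann-sum truncations at $n=0$ and $n=N$ contribute correctly under the $(x,+)/(y,-)$ conventions.
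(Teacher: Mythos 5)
Your overall architecture coincides with the paper's: the proof does proceed by introducing an augmented sum over $(k+1)$-tuples (the Hoffman--Ohno type sums $\Delta^{O}_{N,\gamma}$ and $\Delta^{O'}_{N,\gamma}$ of the paper), applying a partial-fraction decomposition to produce the boundary words $e_{a_i}\cdots e_x$ and $e_y\cdots e_{a_i}$ together with the duplicated-letter terms, and letting the augmented object cancel after telescoping around the cycle $k$ times. Your reading of where each of the four sums in the identity comes from is also essentially correct.

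The gap is in the mechanism you assign to the cyclic rotation. The partial-fraction identity $\frac{1}{(n-\alpha)(n-\beta)}=\frac{1}{\alpha-\beta}\left(\frac{1}{n-\beta}-\frac{1}{n-\alpha}\right)$ acts on two kernels attached to the \emph{same} summation variable; it can split a product at one node of the chain $n_1\le_{\gamma}\cdots\le_{\gamma}n_k$, but it cannot move a variable from the bottom of the chain to the top, which is what rotating $e_{a_1}\cdots e_{a_k}$ into $e_{a_k}e_{a_1}\cdots e_{a_{k-1}}$ requires. The paper supplies this with a separate two-variable summation-by-parts identity: the augmented kernel is the \emph{difference} $\omega^{N}(n_1,n_{k+1})-\omega^{N}_{x}(n_{k+1})=\frac{1}{c-\tilde{m}}-\frac{1}{c-xN}$, which telescopes as $\sum_{xN<d\le\tilde{m}}\left(\frac{1}{c-d}-\frac{1}{c-(d-1)}\right)$; interchanging the order of summation converts a sum over an auxiliary variable lying \emph{above} $n_k$ into a sum over an auxiliary variable lying \emph{below} $n_1$ with the dual kernel $\omega^{N}_{y}(d)-\omega^{N}(n_k,d)$ (part (1) of Proposition \ref{prop:transport}). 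Only after this transport does the partial-fraction step (part (2) of Proposition \ref{prop:transport}) apply, converting the dual kernel back into the original one plus exactly the correction terms you describe, including the diagonal contributions $n_0=n_{k+1}$ that yield the duplicated-letter sum. Your ``augmented sum'' is the right object, but unless its extra factor is specified to be this particular difference of kernels, the Abel-summation step --- which is the actual engine of the rotation and the discrete analogue of the change of variables in Hoffman--Ohno's original argument --- is missing, and partial fractions alone will not close the cycle.
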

As its consequence, we show that there is a generalization of the cyclic sum formula for multiple zeta values (\cite{ho03}) for multiple polylogarithms (see Corollary \ref{cor:csf_mpl}).\\
This paper is organized as follows: we give a brief review about usual iterated integrals in Section \ref{sec:ii}.
After introducing discrete iterated integrals and proving basic properties in Section \ref{sec:dii}, we see how discrete iterated integrals converge to usual ones in Section \ref{sec:recovering}.
Finally, in Section \ref{sec:csf}, we prove the cyclic sum formula for discrete iterated integrals along a straight path.
\section*{Acknowledgements}
The author gratefully thanks to Prof.~Minoru Hirose, Prof.~Toshiki Matsusaka and Prof.~Shin-ichiro Seki for kindly letting him know about their result \cite{hms24}. 
\section{Review on usual iterated integrals}\label{sec:ii}
In what follows, we fix a finite subset $D$ of $\bbC$ and put $M\coloneqq\bbC\setminus D$ unless noted otherwise.
\subsection{Tangential base points and paths}
For $p\in D$ and a non-zero tangential vector $v\in T_{p}(\bbC)\simeq\bbC$, we call a tuple $v_{p}\coloneqq (p,v)$ a \emph{tangential base point}, where the isomorphism is induced by the canonical coordinate of $\bbC$.
Let $\pr(v_{p})\coloneqq p$ for a tangential base point $v_{p}$ and $\pr(p)\coloneqq p$ for a usual base point ($=$ an element of $\bbC\setminus D$) $p$. 
\begin{definition}[Path]
    Let $x$ and $y$ be (tangential or not) base points.
    We call $\gamma$ a path if $\gamma\colon[0,1]\to\bbC$ is a piecewise smooth path satisfying $\gamma(0)=\pr(x)$, $\gamma(1)=\pr(y)$, $\gamma((0,1))\in M$,
    \[\lim_{t\to +0}\frac{\gamma(t)-\pr(x)}{t}=v\qquad \text{if }x=v_{\pr(x)}\text{ is tangential}\]
    and
    \[\lim_{t\to 1-0}\frac{\gamma(t)-\pr(y)}{t-1}=-w\qquad \text{if }y=w_{\pr(y)}\text{ is tangential}.\]
    The point $x$ (resp.~$y$) is called the \emph{initial point} (resp.~the \emph{final point}) of $\gamma$.
\end{definition}
The set $\pi_{1}(M;y,x)$ of homotopy classes for such a path $\gamma$ (with the initial point $x$ and the final point $y$) is defined and they admit the composition
\[\pi_{1}(M;y,x)\times\pi_{1}(M;z,y)\to\pi_{1}(M;z,x);~(\gamma,\gamma')\mapsto\gamma\gamma'\]
and inversion $\pi_{1}(M;y,x)\to\pi_{1}(M;x,y);~\gamma\mapsto\gamma^{-1}$ (cf.~\cite[\S3.8.3]{bf24}).
For a path $\gamma$ and real numbers $a,b$ satisfying $0\le a<b\le 1$, we put $\gamma_{a,b}(t)\coloneqq\gamma((1-t)a+tb)$.
Note that, for a path $\gamma$, choosing suffciently small positive numbers $z$ and $z'$, we get the path $\gamma_{z,1-z'}$ whose initial and final point are not tangential.
When $(0,1)\cap D=\emp$, the path defined by $t\mapsto t$ is usually denoted by $\dch$.
\subsection{Algebraic setup}
Define $\fH_{D}$ be the non-commutative free $\bbQ$-algebra generated by formal symbols $e_{z}$, where $z$ runs over $D$.
For a path $\gamma$ with the initial point $x$ and the final point $y$, the subalgebra $\fH_{D}^{\gamma}$ of $\fH_{D}$ is defined by
\[\fH_{D}^{\gamma}\coloneqq\bbQ\oplus\bigoplus_{a\in D\setminus\{\pr(x),\pr(y)\}}\bbQ e_{a}\oplus\bigoplus_{\substack{a\in D\setminus\{\pr(x)\}\\ b\in D\setminus\{\pr(y)\}}}e_{a}\fH_{D}e_{b}.\]
\begin{definition}[Shuffle product]
    Define a $\bbQ$-bilinear product $\sh$ on $\fH_{D}$ by $1\sh w=w\sh 1=w$ for all $w\in\fH_{D}$ and the recursive rule
    \[e_{a}w\sh e_{b}w'=e_{a}(w\sh e_{b}w')+e_{b}(e_{a}w\sh w')\qquad (a,b\in D,~w,w'\in\fH_{D}).\]
    This product induces an associative and commutative $\bbQ$-algebra structure on $\fH_{D}$ and $\fH_{D}^{\gamma}$ for any path $\gamma$.  
\end{definition}
\begin{proposition}\label{prop:hopf}
    With the following structures, $\fH_{D}$ is a Hopf $\bbQ$-algebra.
    \begin{enumerate}
        \item The product $\fH_{D}\otimes\fH_{D}\to\fH_{D}$ is the shuffle product.
        \item The unit $\bbQ\to\fH_{D}$ is determined by $1\mapsto 1$.\\
        \item The coproduct $\fH_{D}\to\fH_{D}\otimes\fH_{D}$ is determined by $\displaystyle 
        \Delta^{\sh}(e_{a_{1}}\cdots e_{a_{k}})\coloneqq\sum_{i=0}^{k}e_{a_{1}}\cdots e_{a_{i}}\otimes e_{a_{i+1}}\cdots e_{a_{k}}$ ($a_{j}\in D$).
        \item The counit $\fH_{D}\to\bbQ$ is determined by $e_{a}\mapsto 0$ ($a\in D$).
        \item The antipode $\fH_{D}\to\fH_{D}$ is determined by $S^{\sh}(e_{a_{1}}\cdots e_{a_{k}})\coloneqq(-1)^{k}e_{a_{k}}\cdots e_{a_{1}}$. 
    \end{enumerate}
\end{proposition}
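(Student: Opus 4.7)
The plan is to verify the Hopf algebra axioms in the order: coalgebra axioms, bialgebra compatibility, and finally the antipode identity. The algebra structure together with associativity and commutativity of $\sh$ has already been asserted in Definition 2.3 (a standard induction on the total length of words using the recursive rule). For the coalgebra part, coassociativity of $\Delta^{\sh}$ is direct: both $(\Delta^{\sh}\otimes\id)\circ\Delta^{\sh}$ and $(\id\otimes\Delta^{\sh})\circ\Delta^{\sh}$ send $e_{a_{1}}\cdots e_{a_{k}}$ to
\[
\sum_{0\le i\le j\le k} e_{a_{1}}\cdots e_{a_{i}}\otimes e_{a_{i+1}}\cdots e_{a_{j}}\otimes e_{a_{j+1}}\cdots e_{a_{k}},
\]
and the counit axiom is immediate since applying the counit to either tensor factor collapses the outer sum to the unique term with that factor equal to $1$.

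The crucial step is the bialgebra compatibility
\[
\Delta^{\sh}(w\sh w')=\Delta^{\sh}(w)\cdot\Delta^{\sh}(w'),
\]
where the right-hand side uses the componentwise product $(u\otimes v)\cdot(u'\otimes v')=(u\sh u')\otimes(v\sh v')$. I would prove this by induction on $|w|+|w'|$. The base case $w=w'=1$ is trivial. For the inductive step, write $w=e_{a}u$ and $w'=e_{b}u'$, expand the left-hand side using the recursive definition of $\sh$, apply the inductive hypothesis to the pairs $(u,e_{b}u')$ and $(e_{a}u,u')$, and then recombine the resulting sums by reversing the role of the recursive rule on each tensor slot; the two expressions match term by term after this rearrangement.

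Finally, for the antipode axiom I would check that $m\circ(S^{\sh}\otimes\id)\circ\Delta^{\sh}=\eta\circ\varepsilon$ on each basis word. For $w=e_{a_{1}}\cdots e_{a_{k}}$ with $k\ge 1$, this reduces to proving
\[
\sum_{i=0}^{k}(-1)^{i}(e_{a_{i}}\cdots e_{a_{1}})\sh(e_{a_{i+1}}\cdots e_{a_{k}})=0,
\]
which follows by induction on $k$ using the recursive rule of $\sh$: expanding the shuffle at position $i$ gives two contributions, one from stripping $e_{a_{i}}$ from the left word and one from stripping $e_{a_{i+1}}$ from the right word, and these telescope across consecutive indices $i$, $i+1$. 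The identity with $S^{\sh}$ on the right tensor slot then follows either by the same telescoping argument or, more cleanly, from the general fact that on a connected graded bialgebra the antipode is unique. I expect the bialgebra step to be the only nontrivial obstacle; everything else is a direct combinatorial verification, and the full result is the classical shuffle Hopf algebra on the alphabet $\{e_{a}\}_{a\in D}$ (cf.\ Reutenauer's treatment of free Lie algebras), which I would cite as shorthand if desired.
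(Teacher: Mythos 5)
Your verification is correct: the coassociativity and counit checks, the induction establishing that the deconcatenation coproduct is a $\sh$-algebra morphism, and the telescoping identity $\sum_{i=0}^{k}(-1)^{i}(e_{a_{i}}\cdots e_{a_{1}})\sh(e_{a_{i+1}}\cdots e_{a_{k}})=0$ for the antipode are exactly the standard argument for the shuffle Hopf algebra. The paper itself gives no proof of this proposition (it is the classical fact you cite), so your proposal simply supplies the omitted standard verification; there is nothing to compare it against and no gap to report.
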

\subsection{Iterated integrals and regularization}
\begin{definition}[Convergent iterated integrals]\label{def:conv_ii}
    Let $\gamma$ be a path with the initial point $x$ and the final point $y$.
    For $w=e_{a_{1}}\cdots e_{a_{k}}\in\fH_{D}$, we put
    \[I_{\gamma}(x;w;y)\coloneqq\int_{0<t_{1}<\cdots<t_{k}<1}\prod_{i=1}^{k}\omega_{a_{i}}(\gamma(t_{i})),\]
    where $\omega_{a}(t)$ is the $1$-form $dt/(t-a)$.
    Extending it $\bbQ$-linearly, we define the $\bbQ$-linear map $I_{\gamma}(x;-;y)\colon\fH_{D}^{\gamma}\to\bbC$.
    We call this map (or its image) the (convergent) \emph{iterated integral}.
\end{definition}
\begin{definition}[Regularized iterated integrals]\label{def:reg_ii}
    Take a sufficiently small number $0<z<1$.
    Then, for any $w\in\fH_{D}$, there uniquely exists $P(x)\in\bbC[x]$ such that
    \[I_{\gamma_{z,1-z}}(\gamma(z);w;\gamma(1-z))=P(\log z)+O(z\log^{J}z)\qquad (z\to +0)\]
    for some $J>0$ (cf.~\cite[Lemma 3.351]{bf24}).
    From these data, we define the \emph{regularized iterated integral} by $I_{\gamma}(x;w;y)\coloneqq P(0)$.
\end{definition}
\begin{remark}
    On the above setting, if $w$ is an element of $\fH_{D}^{\gamma}$ the regularized iterated integral $I_{\gamma}(x;w;y)$ coincides with the iterated integral in the sense of Definition \ref{def:conv_ii}.
\end{remark}
The following three formulas are basic properties of iterated integrals.
\begin{theorem}[$=$ Theorem \ref{thm:basic_ii}]
    Let $\gamma$ and $\gamma'$ be paths.
    Denote by $x_{\circ}$ (resp.~$y_{\circ}$) the initial (resp.~final) point of $\circ\in\{\gamma,\gamma'\}$.
    Assume that $y_{\gamma}$ is equal to $x_{\gamma'}$.
    Then, for $a_{1},\ldots,a_{k}\in D$ and $w,w'\in\fH_{D}$, we have 
    \begin{enumerate}
        \item(Path composition formula) $\displaystyle I_{\gamma\gamma'}(x_{\gamma};e_{a_{1}}\cdots e_{a_{k}};y_{\gamma'})=\sum_{i=0}^{k}I_{\gamma}(x_{\gamma};e_{a_{1}}\cdots e_{a_{i}};y_{\gamma})I_{\gamma'}(x_{\gamma'};e_{a_{i+1}}\cdots e_{a_{k}};y_{\gamma'})$.
        \item(Reversal formula) $I_{\gamma}(x_{\gamma};e_{a_{1}}\cdots e_{a_{k}};y_{\gamma})=(-1)^{k}I_{\gamma^{-1}}(y_{\gamma};e_{a_{k}}\cdots e_{a_{1}};x_{\gamma})$.
        \item(Shuffle product formula) $I_{\gamma}(x_{\gamma};w\sh w';y_{\gamma})=I_{\gamma}(x_{\gamma};w;y_{\gamma})I_{\gamma}(x_{\gamma};w';y_{\gamma})$.
    \end{enumerate} 
\end{theorem}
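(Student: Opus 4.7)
The plan is to prove each of the three identities first in the convergent case, where the words lie in $\fH_D^{\gamma}$ (so the integrals of Definition \ref{def:conv_ii} are absolutely convergent), and then to bootstrap to the general regularized statement of Definition \ref{def:reg_ii} by applying the convergent identity to the truncation $\gamma_{z,1-z}$ and invoking the uniqueness of the polynomial $P$ appearing there.

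In the convergent case, the three proofs are by direct manipulation of the defining integrals. For (1), I parametrize $\gamma\gamma'$ so that $\gamma$ covers $[0,1/2]$ and $\gamma'$ covers $[1/2,1]$, then decompose the simplex $\{0<t_{1}<\cdots<t_{k}<1\}$ as the disjoint union, over $i\in\{0,\dots,k\}$, of $\{0<t_{1}<\cdots<t_{i}<1/2\}\times\{1/2<t_{i+1}<\cdots<t_{k}<1\}$; a linear rescaling of each factor then produces the $i$-th summand on the right. For (2), the change of variables $s_{j}=1-t_{k+1-j}$ reverses the ordering of the simplex and turns each form $\omega_{a_{i}}(\gamma(t_{i}))$ into $-\omega_{a_{i}}(\gamma^{-1}(s_{k+1-i}))$, producing the sign $(-1)^{k}$ overall. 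For (3), the product $I_{\gamma}(x_{\gamma};w;y_{\gamma})I_{\gamma}(x_{\gamma};w';y_{\gamma})$ is an integral over the product of two simplices; up to a measure zero set where some $t_{i}$ equals some $u_{j}$, this product decomposes as a disjoint union of simplices indexed by $(|w|,|w'|)$-shuffles, and summing the pieces yields $I_{\gamma}(x_{\gamma};w\sh w';y_{\gamma})$ by the recursive definition of $\sh$.

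To pass from convergent to regularized identities, write $P_{w}(\log z)$ for the polynomial attached to $w$ and $\gamma$ by Definition \ref{def:reg_ii}. For (3), applying the convergent shuffle formula to $\gamma_{z,1-z}$ and multiplying out the asymptotic expansions gives $P_{w\sh w'}(\log z)+O(z\log^{J}z)=P_{w}(\log z)P_{w'}(\log z)+O(z\log^{J'}z)$; uniqueness of the regularization polynomial then forces $P_{w\sh w'}=P_{w}P_{w'}$, and evaluating at $0$ yields (3). For (2), the truncation $(\gamma^{-1})_{z,1-z}$ is the reverse of $\gamma_{z,1-z}$ up to orientation-preserving reparametrization, so the convergent reversal formula applied to the truncation immediately implies the identity on polynomials and hence at the constant term. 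For (1), I truncate $\gamma$ and $\gamma'$ separately at all four endpoints; if the midpoint $y_{\gamma}=x_{\gamma'}$ is tangential, the two inner truncation parameters must be chosen using a common local coordinate at that point and tended to zero together, after which the convergent path composition for the truncations becomes a polynomial identity in the various $\log$-parameters whose constant term is (1).

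I expect the main obstacle to lie in case (1) at a tangential midpoint, where both factors on the right-hand side develop logarithmic divergences that must cancel against the divergences on the left. The cancellation is ultimately forced by the uniqueness in Definition \ref{def:reg_ii} once the convergent identity is in hand for the truncated paths, but some care is required to verify that truncating $\gamma$ on the right and $\gamma'$ on the left by the same local parameter agrees with truncating $\gamma\gamma'$ at its interior tangential point in the chosen parametrization; this is where the Hopf-algebra structure of Proposition \ref{prop:hopf} is most useful, since it allows one to match the two sides termwise in the variable $\log z$.
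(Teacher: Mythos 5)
The paper does not prove this statement: Section \ref{sec:ii} is explicitly a review, and the theorem is quoted as a standard fact about iterated integrals with tangential base points, with the background delegated to the Burgos Gil--Fres\'an reference cited there. So there is no in-paper proof to compare yours against, and your proposal must be judged against the classical argument, which it in fact reproduces. On that score it is essentially sound: the simplex decomposition for (1), the substitution $s_{j}=1-t_{k+1-j}$ for (2), and the shuffle decomposition of the product of two simplices for (3) are the correct convergent-case arguments, and your passage to the regularized statement via the uniqueness of the polynomial $P$ in Definition \ref{def:reg_ii} works cleanly for (2) and (3), since $(\gamma^{-1})_{z,1-z}=(\gamma_{z,1-z})^{-1}$ and the error terms $O(z\log^{J}z)$ are stable under multiplication by polynomials in $\log z$.

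The one point that remains a sketch is the one you flag yourself: when $y_{\gamma}=x_{\gamma'}$ is tangential, the concatenation $\gamma\gamma'$ meets $D$ at an interior point, so $I_{\gamma\gamma'}$ must itself be interpreted through a truncation near that point (equivalently, through the composition in $\pi_{1}(M;\cdot,\cdot)$ with tangential base points), and one has to show that the interior truncation parameter can be sent to zero with the logarithmic divergences of the two factors on the right cancelling against those of the left. Your appeal to Proposition \ref{prop:hopf} to ``match the two sides termwise in $\log z$'' points at the right mechanism but is not yet an argument; to close it you would apply the convergent path-composition formula to the three-fold concatenation obtained by excising a short arc around the tangential point, obtain a polynomial identity in the two independent logarithmic parameters, and only then invoke uniqueness of the asymptotic expansion. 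Since the paper treats the whole theorem as known, this is a gap relative to the literature rather than a deviation from anything the paper does.
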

\section{Discrete iterated integrals}\label{sec:dii}
In what follows, we fix a large positive even integer $N$ and put $\bbZ_{/N}\coloneqq \{n/N\mid n\in\bbZ\}\subseteq\bbQ$.
The symbol $D$ stands for a finite subset of $\bbC$.
\subsection{Discrete paths}
In this subsection, we discretize the theory of continuous paths connecting (tangential or not) base points.
\begin{definition}[Discrete paths]
    \begin{enumerate}
        \item The set of \emph{endpoints} is $B\coloneqq(\bbZ_{/N}\setminus D)\sqcup((\bbZ_{/N}\cap D)\times\{1,\ldots,N\})$.
        If $a\in D$, we also write $(a,+)\coloneqq(a,N/2)$ and $(a,-)\coloneqq (a,0)\coloneqq (a,N)$.
        \item Define a map $B\to\bbZ_{/N}$; $x\mapsto\hat{x}$ as $\hat{x}\coloneqq x$ if $x\in \bbZ_{/N}\setminus D$ and $\hat{x}\coloneqq a$ if $x=(a,i)\in(\bbZ_{/N}\cap D)\times\{1,\ldots,N\}$.
        \item We define the following three types of \emph{pieces}:
        \begin{enumerate}
            \item A \emph{positive straight piece} is an element $(x,y)\in(\bbZ_{/N})^{2}$ such that $x<y$ and the open interval $(x,y)$ does not intersect with $D$.
            Its \emph{initial point} is $(x,+)$ if $x\in D$ and is $x$ if $x\notin D$.
            Its \emph{final point} is $(y,-)$ if $y\in D$ and is $y$ if $y\notin D$.
            \item A \emph{negative straight piece} is an element $(x,y)\in(\bbZ_{/N})^{2}$ such that $y<x$ and the open interval $(-x,-y)$ does not intersect with $D$.
            Its initial point is $(x,-)$ if $x\in D$ and is $x$ if $x\notin D$.
            Its final point is $(y,+)$ if $y\in D$ and is $y$ if $y\notin D$.
            \item A \emph{positive singular piece} is an element $(x,a,b,+)$ of $(D\cap\bbZ_{/N})\times\{0,\ldots,N\}^{2}\times\{+,-\}$ such that $a<b$.
            Its initial point is $(x,a)$ and final point is $(x,b)$.
            \item A \emph{negative singular piece} is an element $(x,a,b,-)$ of $(D\cap\bbZ_{/N})\times\{0,\ldots,N\}^{2}\times\{+,-\}$ such that $a<b$.
            Its initial point is $(x,N-b)$ and final point is $(x,N-a)$.
            \item A \emph{trivial piece} is an element $x$ of copy $B'$ of $B$.
            Its initial and final points are $x$.
            If $p\in D$, we use the notation $(p,a,a,+)\coloneqq (p,a,a,-)\coloneqq (p,a)\in B'$ for every $a\in\{1,\ldots,N\}$ and regard it as one of singular pieces.
        \end{enumerate}
        \item Define maps $s$ (resp.~$t$) from the set of pieces to $B$ to be what assigns a piece to its initial (resp.~final) point.
        \item We say that a finite tuple $\gamma=(\gamma_{1},\ldots,\gamma_{r})$ ($r\ge 1$) is a \emph{discrete path} (on $\bbZ_{/N}\setminus D$) if each $\gamma_{i}$ is one of pieces above such that $t(\gamma_{i})=s(\gamma_{i+1})$ for every $i\in\{1,\ldots,k-1\}$.
        Define $s(\gamma)\coloneqq s(\gamma_{1})$ and $t(\gamma)\coloneqq t(\gamma_{r})$.
    \end{enumerate}
\end{definition}
Note that the above definition of discrete paths depends on $D$.
However, since we fix $D$, we omit the information of $D$ and simply say as ``a discrete path $\gamma$''.
\begin{definition}[Operations on discrete paths]
    ${}$
    \begin{enumerate}
        \item For a discrete path $\gamma_{i}=(\gamma_{i,1},\ldots,\gamma_{i,r_{i}})$ ($i\in\{1,2\}$) satisfying $t(\gamma_{1})=s(\gamma_{2})$, the \emph{composition} $\gamma_{1}\gamma_{2}$ is the concatenation $(\gamma_{1,1},\ldots,\gamma_{1,r_{1}},\gamma_{2,1},\ldots,\gamma_{2,r_{2}})$ of them as tuples of pieces.
        \item For a discrete path $\gamma=(\gamma_{1},\ldots,\gamma_{r})$, the \emph{inverse} is a discrete path $\gamma^{-1}\coloneqq(\gamma_{r}^{-1},\ldots,\gamma_{1}^{-1})$, where the inverse of pieces are defined as:
        \begin{enumerate}
            \item If $\gamma_{i}=(x,y)$ is a (positive or negative) straight piece, put $\gamma_{i}^{-1}\coloneqq(y,x)$.
            \item If $\gamma_{i}=(x,a,b,\sigma)$ is a (positive or negative) singular piece, put $\gamma_{i}^{-1}\coloneqq (x,N-b,N-a,-\sigma)$.
            \item If $\gamma_{i}$ is a trivial piece, its inverse is itself.
        \end{enumerate} 
    \end{enumerate}
\end{definition}
\begin{remark}\label{rem:composition_of_pieces}
From the definition, it is clear that a discrete path $\gamma=(\gamma_{1},\ldots,\gamma_{r})$ is composition of its pieces $\gamma_{1}\cdots\gamma_{r}$ by identifying the piece $\gamma_{i}$ with the discrete path $(\gamma_{i})$.
Hereafter we sometimes use this identification.
\end{remark}
\begin{definition}[Provision of discrete paths]
    Let $\gamma=(\gamma_{1},\ldots,\gamma_{r})$ be a discrete path.
    We define the \emph{provision} of $\gamma$ as the set $S_{\gamma}\coloneqq\bigsqcup_{i=1}^{r}(S_{\gamma_{i}}\times\{i\})$, where 
    $S_{\gamma_{i}}$ is determined as follows:
    \begin{enumerate}
        \item If $\gamma_{i}=(a,b)$ is a positive straight piece, $S_{\gamma_{i}}\coloneqq\{aN+1,\ldots,bN-1\}\subseteq\bbZ$.
        \item If $\gamma_{i}=(a,b)$ is a negative straight piece, $S_{\gamma_{i}}\coloneqq\{-aN+1,\ldots,-bN,-1\}\subseteq\bbZ$.
        \item If $\gamma_{i}=(x,a,b,\sigma)$ is a (positive or negative) singular piece, $S_{\gamma_{i}}$ is a set $\{\theta_{a+1}^{\sigma},\ldots,\theta_{b}^{\sigma}\}$, which consists of formal symbols $\theta_{j}^{\pm}$ (the sign represents whether the piece is positive or negative, which does not mean the inverse).
        \item If $\gamma_{i}$ is trivial, set $S_{\gamma_{i}}\coloneqq\emp$.
    \end{enumerate}
    For an element $n\in S_{\gamma}$, denote by $\tilde{n}$ the first component of $n$ and define $n+a\coloneqq (\tilde{n}+a,i)\in\bbZ\times\{1,\ldots,r\}$ for $a\in\bbZ$ and $n\in S_{\gamma_{i}}\subseteq S_{\gamma}$ if $\gamma_{i}$ is straight.
    The projection on the second component $S_{\gamma}\to\{1,\ldots,r\}$ is denoted by $J$.
\end{definition}
Moreover, we introduce the order $<_{\gamma}$ for the provision of a path $\gamma=(\gamma_{1},\ldots,\gamma_{r})$ in the following way.
\begin{enumerate}
    \item $(m,i)<_{\gamma}(n,i+1)$ for every $i\in\{1,\ldots,r-1\}$, $(m,i)\in S_{\gamma_{i}}$ and $(n,i+1)\in S_{\gamma_{i+1}}$.
    \item $(m,i)<_{\gamma}(n,i)$ if $\gamma_{i}$ is a positive or negative piece and $m<n$.
    \item $(\theta_{j}^{\pm},i)<_{\gamma}(\theta_{j+1}^{\pm},i)$ for $j\in\{1,\ldots,N-1\}$ if $\gamma_{i}$ is a (positive or negative) singular piece.
\end{enumerate}
The condition that $A<_{\gamma}B$ or $A=B$ holds is denoted by $A\le_{\gamma}B$ as usual. 
\begin{definition}[Elimination of discrete paths]
Let $\gamma=(\gamma_{1},\ldots,\gamma_{r})$ be a discrete path and $m,n\in S_{\gamma}$ satisfying $m\le_{\gamma}n$.
Write $m=(\tilde{m},i)$ and $n=(\tilde{n},j)$.
When $i\neq j$, we define a new discrete path $\gamma_{m,n}$ by $(\gamma'_{i},\gamma_{i+1},\ldots,\gamma_{j-1},\gamma''_{j})$ where
\[\gamma'_{i}\coloneqq\begin{cases}
    ((\tilde{m}-1)/N,\widehat{t(\gamma_{i})}) & \text{if }\gamma_{i}\text{ is a positive straight path},\\
    (-(\tilde{m}-1)/N,\widehat{t(\gamma_{i})}) & \text{if }\gamma_{i}\text{ is a negative straight path},\\
    (x,h,b,\pm) & \text{if }\gamma_{i}=(x,a,b,\pm)\text{ is a (positive or negative) singular path and }\tilde{m}=\theta_{h}^{\pm},
\end{cases}\]
and
\[\gamma''_{j}\coloneqq\begin{cases}
    (\widehat{s(\gamma_{j})},(\tilde{n}+1)/N) & \text{if }\gamma_{j}\text{ is a positive straight path},\\
    (\widehat{s(\gamma_{j})},-(\tilde{n}+1)/N) & \text{if }\gamma_{j}\text{ is a negative straight path},\\
    (x,a,h,\pm) & \text{if }\gamma_{j}=(x,a,b,\pm)\text{ is a (positive or negative) singular path and }\tilde{n}=\theta_{h}^{\pm}.
\end{cases}\]
If $i=j$, we define
\[\gamma_{m,n}\coloneqq\begin{cases}
    ((\tilde{m}-1)/N,(\tilde{n}+1)/N) & \text{if }\gamma_{i}\text{ is a positive straight path},\\
    (-(\tilde{m}-1)/N,-(\tilde{n}+1)/N) & \text{if }\gamma_{i}\text{ is a negative straight path},\\
    (x,j,j',\pm) & \text{if }\gamma_{i}=(x,a,b,\pm)\text{ is a (positive or negative) singular path, }\tilde{m}=\theta_{j}^{\pm}\text{ and }\tilde{n}=\theta_{j'}^{\pm}.
\end{cases}\]
If $m=n$, the path $\gamma_{m,n}$ is defined to be a trivial piece $\tilde{m}$ if $\gamma_{i}$ ($=\gamma_{j}$) is straight and to be a trivial piece $(x,j)$ if $\gamma_{i}=\gamma_{j}=(x,a,b,\pm)$ is singular, where $j$ is the subscript of $\tilde{m}=\theta_{j}^{\pm}$.
\end{definition} 
Note that, for a discrete path $\gamma$ and $a,b,m,n\in S_{\gamma}$ satisfying $a\le_{\gamma}m\le_{\gamma}n\le_{\gamma}b$, there is an inclusion $S_{\gamma_{m,n}}\subseteq S_{\gamma_{a,b}}\subseteq S_{\gamma}$ and the iterated elimination $(\gamma_{a,b})_{m,n}$ is equal to $\gamma_{m,n}$.
Moreover we see that $S_{\gamma_{a,b}}=\{n\in S_{\gamma}\mid a<_{\gamma}n<_{\gamma}b\}$ holds.
\subsection{Definition of discrete iterated integrals}
\begin{definition}[Discrete differential forms]
    Let $\gamma=(\gamma_{1},\ldots,\gamma_{r})$ be a discrete path, $n=(\tilde{n},i)\in S_{\gamma}$ and $a\in D$.
    Then we define $\omega_{a}(n)\in\bbC[\theta]$ by
    \[\omega_{a}^{N}(n)\coloneqq\begin{cases}
        \displaystyle\frac{1}{\tilde{n}-aN} & \text{if }\gamma_{i}\text{ is a positive straight piece},\\
        \displaystyle\frac{1}{\tilde{n}+aN} & \text{if }\gamma_{i}\text{ is a negative straight piece},\\
        \displaystyle\frac{\theta}{N} & \text{if }\gamma_{i}\text{ is a positive singular piece whose first entry is }a,\\
        -\displaystyle\frac{\theta}{N} & \text{if }\gamma_{i}\text{ is a negative singular piece whose first entry is }a,\\
        0 & \text{otherwise.}
    \end{cases}\]
    Furthermore, for elements $m=(\tilde{m},i)$ and $n=(\tilde{n},j)$ of $S_{\gamma}$, we define
    \[\omega^{N}(m,n)\coloneqq\begin{cases}
        \displaystyle\frac{1}{\tilde{n}-\tilde{m}} & \text{if }\gamma_{i}\text{ and }\gamma_{j}\text{ are both positive or negative straight pieces and }\tilde{n}\neq\tilde{m},\\
        0 & \text{otherwise.}
    \end{cases}\]
\end{definition}
\begin{definition}[Discrete iterated integrals]
    Let $\gamma$ be a discrete path and $w=e_{a_{1}}\cdots e_{a_{k}}\in\fH_{D}$.
    Then we define the \emph{discrete iterated integral} by
    \[\Delta_{N,\gamma}(s(\gamma);w;t(\gamma))\coloneqq\sum_{\substack{n_{1},\ldots,n_{k}\in S_{\gamma}\\n_{1}\le_{\gamma}\cdots\le_{\gamma}n_{k}}}\prod_{i=1}^{k}\omega_{a_{i}}^{N}(n_{i}).\]
    We put $\Delta_{N,\gamma}(s(\gamma);1;t(\gamma))\coloneqq 1$.
    Moreover, by the $\bbQ$-linear extension, we define $\Delta_{N,\gamma}(s(\gamma);-;t(\gamma))\colon\fH_{D}^{\gamma}\to\bbC[\theta]$.
\end{definition}
\begin{example}
    Let $D=\{0,1\}$, $w=e_{1}e_{0}^{2}$ and $\gamma=(\gamma_{1})$ be a discrete path which consists of only the positive straight piece $\gamma_{1}=(0,1)$.
    Then $S_{\gamma}=\{(1,1),(2,1)\ldots,(N-1,1)\}$ and $(i,1)\le_{\gamma}(j,1)$ if and only if $i\le j$.
    Thus we have
    \[\Delta_{N,\gamma}((0,+);e_{1}e_{0}^{2};(1,-))=\sum_{0<n_{1}\le n_{2}\le n_{3}<N}\frac{1}{(n_{1}-N)n_{2}n_{3}}.\]
    Taking the limit $N\to\infty$ and using \cite[Theorem 1.3]{msw24}, we obtain $\lim_{N\to\infty}\Delta_{N,\gamma}((0,+);e_{1}e_{0}^{2};(1,-))=I_{\dch}(1_{0};e_{1}e_{0}^{2};(-1)_{1})$.
    More general examples explaining such a limit which tends to the usual iterated integral appears in \S\ref{sec:recovering}.
\end{example}
\subsection{Basic formulas for discrete iterated integrals}
\begin{proposition}\label{prop:singular}
    Let $\gamma=(x,a,b,\pm)$ is a singular piece and identify it with a discrete path $(\gamma)$.
    Then we have
    \[\Delta_{N,\gamma}(s(\gamma);e_{a_{1}}\cdots e_{a_{k}};t(\gamma))=\begin{cases}
        \displaystyle \frac{(b-a)_{k}}{k!}\left(\frac{\pm\theta}{N}\right)^{k}& \text{if }a_{1}=\cdots=a_{k}=x,\\
        0 & \text{otherwise}
    \end{cases}\]
    for $a_{1},\ldots,a_{k}\in D$.
    Here $(X)_{k}\coloneqq X(X+1)\cdots (X+k-1)$ stands for the Pochhammer symbol.
\end{proposition}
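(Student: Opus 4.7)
The plan is a direct computation starting from the definition of $\Delta_{N,\gamma}$. First I would observe that the provision $S_{\gamma}$ of the singular piece $\gamma=(x,a,b,\pm)$ is a linearly ordered set of size $b-a$, namely $\{\theta_{a+1}^{\pm},\ldots,\theta_{b}^{\pm}\}$ (paired with the index $1$), and that by the definition of $\omega_{c}^{N}$ on a singular piece we have
\[
\omega_{c}^{N}(n)=\begin{cases}\pm\theta/N & \text{if }c=x,\\ 0 & \text{otherwise,}\end{cases}
\]
for every $n\in S_{\gamma}$, with sign $+$ or $-$ according to whether $\gamma$ is positive or negative.

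From this the second case follows immediately: if some $a_{i}\neq x$, then every term in the defining sum
\[
\Delta_{N,\gamma}(s(\gamma);e_{a_{1}}\cdots e_{a_{k}};t(\gamma))=\sum_{\substack{n_{1},\ldots,n_{k}\in S_{\gamma}\\ n_{1}\le_{\gamma}\cdots\le_{\gamma}n_{k}}}\prod_{i=1}^{k}\omega_{a_{i}}^{N}(n_{i})
\]
contains the zero factor $\omega_{a_{i}}^{N}(n_{i})$, so the whole expression vanishes.

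In the remaining case $a_{1}=\cdots=a_{k}=x$, every factor equals $\pm\theta/N$ independently of which $n_{i}\in S_{\gamma}$ is chosen, so each admissible tuple contributes $(\pm\theta/N)^{k}$ and
\[
\Delta_{N,\gamma}(s(\gamma);e_{x}^{k};t(\gamma))=\left(\frac{\pm\theta}{N}\right)^{k}\cdot\#\{(n_{1},\ldots,n_{k})\in S_{\gamma}^{k}\mid n_{1}\le_{\gamma}\cdots\le_{\gamma}n_{k}\}.
\]
Since $(S_{\gamma},\le_{\gamma})$ is totally ordered of cardinality $b-a$, the number of weakly increasing $k$-tuples is the standard multiset coefficient $\binom{b-a+k-1}{k}=(b-a)_{k}/k!$, which finishes the proof.

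There is essentially no obstacle: the only thing to be careful about is reading the sign convention and the indexing of the formal symbols $\theta_{j}^{\pm}$ correctly, so that the cardinality of $S_{\gamma}$ is identified as $b-a$ (and not $b-a+1$ or $b-a-1$), and that the sign of $\omega_{x}^{N}$ matches the $\pm$ in the piece.
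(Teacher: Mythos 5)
Your proof is correct, and it takes a genuinely different (and more direct) route than the paper. The paper argues by induction on $k$: it peels off the last letter, applies the inductive hypothesis to the truncated piece $(x,a,n,\pm)$, and then evaluates $\sum_{a<n\le b}\frac{(n-a)_{k}}{k!}$ by writing the summand as the telescoping difference $\frac{(n-a)_{k+1}}{(k+1)!}-\frac{(n-a-1)_{k+1}}{(k+1)!}$. You instead observe that on a singular piece every nonzero factor $\omega_{x}^{N}(n_{i})$ equals $\pm\theta/N$ regardless of $n_{i}$, reduce the computation to counting weakly increasing $k$-tuples in the totally ordered set $S_{\gamma}$ of cardinality $b-a$, and quote the multiset coefficient $\binom{b-a+k-1}{k}=(b-a)_{k}/k!$. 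Your identification of $|S_{\gamma}|=b-a$ from $\{\theta_{a+1}^{\pm},\ldots,\theta_{b}^{\pm}\}$ is right, the vanishing case is handled identically in both arguments, and your count is exactly the closed form that the paper's telescoping sum reproves from scratch; so the two proofs are essentially the same computation packaged differently, with yours being shorter at the cost of invoking a standard combinatorial identity and the paper's being self-contained. (Both arguments also degenerate correctly when $a=b$, where $S_{\gamma}=\varnothing$ and $(0)_{k}=0$ for $k\ge 1$.)
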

\begin{proof}
    We proceed the proof by induction on $k$.
    The case where $k=0$ is obvious.
    If the case for some $k$ holds, we have
    \begin{align}
        \Delta_{N,\gamma}(s(\gamma);e_{a_{1}}\cdots e_{a_{k+1}};t(\gamma))
        &=\sum_{a<n_{1}\le\cdots\le n_{k+1}\le b}\prod_{i=1}^{k+1}\omega_{a_{i}}^{N}(\theta_{n_{i}}^{\pm})\\
        &=\sum_{a\le n\le b}\Delta_{N,(x,a,n,\pm)}((x,a);e_{a_{1}}\cdots e_{a_{k}};(x,n))\omega_{a_{k+1}}(\theta_{n}^{\pm}).
    \end{align}
    If $\{a_{1},\ldots,a_{k+1}\}\neq\{x\}$ we obtain the desired result.
    Otherwise from this expression we have
    \begin{align}
        \Delta_{N,\gamma}(s(\gamma);e_{a_{1}}\cdots e_{a_{k+1}};t(\gamma))
        &=\sum_{a<n\le b}\frac{(n-a)_{k}}{k!}\left(\frac{\pm\theta}{N}\right)^{k}\cdot\frac{\pm\theta}{N}\\
        &=\left(\frac{\pm\theta}{N}\right)^{k+1}\sum_{a<n\le b}\frac{(n-a)_{k}}{k!}\\
        &=\left(\frac{\pm\theta}{N}\right)^{k+1}\sum_{a<n\le b}\left(\frac{(n-a)_{k+1}}{(k+1)!}-\frac{(n-a-1)_{k+1}}{(k+1)!}\right)\\
        &=\frac{(b-a)_{k+1}}{(k+1)!}\left(\frac{\pm\theta}{N}\right)^{k+1}.
    \end{align}
\end{proof}
The following two lemmas are obvious from the definition.
\begin{lemma}\label{lem:path_composition}
    Let $\gamma=(\gamma_{1},\ldots,\gamma_{r})$ be a discrete path and $w=e_{a_{1}}\cdots e_{a_{k}}\in\fH_{D}$.
    Then we have
    \[\Delta_{N,\gamma}(s(\gamma);e_{a_{1}}\cdots e_{a_{k}};t(\gamma))
    =\sum_{0=i_{0}\le i_{1}\le\cdots\le i_{r-1}\le i_{r}=k}\prod_{j=1}^{r}\sum_{\substack{n_{i_{j-1}+1},\ldots,n_{i_{j}}\in S_{\gamma_{j}}\\ n_{i_{j-1}+1}\le_{\gamma_{j}}\cdots\le_{\gamma_{j}}n_{i_{j}}}}\prod_{h=i_{j-1}+1}^{i_{j}}\omega_{a_{h}}^{N}(n_{h}).\]
\end{lemma}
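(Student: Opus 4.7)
The plan is to unfold the definition of $\Delta_{N,\gamma}$ and reindex the sum according to the piece to which each $n_h$ belongs. By construction, the totally ordered set $(S_\gamma,\le_\gamma)$ is the ordinal sum of the blocks $S_{\gamma_1}\times\{1\},\ldots,S_{\gamma_r}\times\{r\}$: the first rule defining $<_\gamma$ guarantees that every element of $S_{\gamma_j}\times\{j\}$ is strictly $<_\gamma$-smaller than every element of $S_{\gamma_{j+1}}\times\{j+1\}$. Therefore any weakly $<_\gamma$-increasing tuple $(n_1,\ldots,n_k)\in S_\gamma^k$ decomposes uniquely as a concatenation of blocks coming from $S_{\gamma_1},S_{\gamma_2},\ldots,S_{\gamma_r}$ in that order.

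Concretely, I would argue as follows. Given such a chain, set $i_j$ to be the largest index $h$ with $J(n_h)\le j$ (and $i_0\coloneqq 0$, $i_r\coloneqq k$). Then $0=i_0\le i_1\le\cdots\le i_r=k$, and $n_{i_{j-1}+1},\ldots,n_{i_j}$ lie in $S_{\gamma_j}$ (the block may be empty when $i_{j-1}=i_j$) and form a weakly $\le_{\gamma_j}$-increasing sequence, because the order $\le_\gamma$ restricted to a single block $S_{\gamma_j}\times\{j\}$ coincides with $\le_{\gamma_j}$ by the second and third rules in the definition of $<_\gamma$. Conversely, any choice of indices $0=i_0\le\cdots\le i_r=k$ together with weakly increasing tuples in the respective $S_{\gamma_j}$'s produces, by concatenation, a unique weakly $\le_\gamma$-increasing $k$-tuple in $S_\gamma$. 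This establishes a bijection between the two indexing sets.

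Finally, I would observe that the summand $\prod_{h=1}^{k}\omega_{a_h}^N(n_h)$ factors across the bijection as $\prod_{j=1}^{r}\prod_{h=i_{j-1}+1}^{i_j}\omega_{a_h}^N(n_h)$, simply because the product is taken over disjoint ranges of $h$. Substituting this factorisation into the definition of $\Delta_{N,\gamma}(s(\gamma);e_{a_1}\cdots e_{a_k};t(\gamma))$ and rearranging the summation yields the claimed identity. No real obstacle arises: the content of the lemma is entirely combinatorial book-keeping on the definitions of $S_\gamma$ and $<_\gamma$, which is why the paper flags it as obvious. The only mild subtlety is to handle the possibly empty blocks $i_{j-1}=i_j$ gracefully, but the inner sum in that case is by convention the empty product $1$, consistent with the $\Delta_{N,\gamma_j}(\cdots;1;\cdots)=1$ convention.
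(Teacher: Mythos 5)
Your proposal is correct and is exactly the argument the paper has in mind: the paper declares this lemma ``obvious from the definition,'' and your write-up simply makes explicit the ordinal-sum decomposition of $(S_{\gamma},\le_{\gamma})$ into the blocks $S_{\gamma_{j}}\times\{j\}$, the resulting bijection between weakly increasing $k$-tuples and choices of cut points $i_{0}\le\cdots\le i_{r}$ with increasing tuples in each block, and the factorisation of the summand. The handling of empty blocks via the empty-product convention is the right (and only) point of care.
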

\begin{corollary}[Path composition formula]\label{cor:path_composition}
    Let $\gamma$ and $\gamma'$ be composable discrete paths (i.e., $t(\gamma)=s(\gamma')$).
    Then, for $w=e_{a_{1}}\cdots e_{a_{k}}\in\fH_{D}$, we have 
    \[\Delta_{N,\gamma\gamma'}(s(\gamma);e_{a_{1}}\cdots e_{a_{k}};t(\gamma'))=\sum_{i=0}^{k}\Delta_{N,\gamma}(s(\gamma);e_{a_{1}}\cdots e_{a_{i}};t(\gamma))\Delta_{N,\gamma'}(s(\gamma');e_{a_{i+1}}\cdots e_{a_{k}};t(\gamma')).\]
\end{corollary}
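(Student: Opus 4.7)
The plan is to deduce the path composition formula directly from Lemma \ref{lem:path_composition} by reorganizing the summation. Write $\gamma=(\gamma_{1},\ldots,\gamma_{r})$ and $\gamma'=(\gamma'_{1},\ldots,\gamma'_{r'})$ so that the concatenation is $\gamma\gamma'=(\gamma_{1},\ldots,\gamma_{r},\gamma'_{1},\ldots,\gamma'_{r'})$, a discrete path with $r+r'$ pieces. By the definition of the provision and the order $<_{\gamma\gamma'}$, we have $S_{\gamma\gamma'}=S_{\gamma}\sqcup S_{\gamma'}$ as sets and every element of $S_{\gamma}$ is strictly less than every element of $S_{\gamma'}$ in $<_{\gamma\gamma'}$; moreover the restrictions of $<_{\gamma\gamma'}$ to $S_{\gamma}$ and $S_{\gamma'}$ coincide with $<_{\gamma}$ and $<_{\gamma'}$ respectively.

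Applying Lemma \ref{lem:path_composition} to $\gamma\gamma'$ expresses the left-hand side as
\[
\sum_{0=i_{0}\le i_{1}\le\cdots\le i_{r+r'}=k}\;\prod_{j=1}^{r+r'}\sum_{\substack{n_{i_{j-1}+1},\ldots,n_{i_{j}}\in S_{\gamma''_{j}}\\ n_{i_{j-1}+1}\le_{\gamma''_{j}}\cdots\le_{\gamma''_{j}}n_{i_{j}}}}\prod_{h=i_{j-1}+1}^{i_{j}}\omega_{a_{h}}^{N}(n_{h}),
\]
where $\gamma''_{j}$ denotes the $j$-th piece of $\gamma\gamma'$. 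I would then stratify this multi-sum by the value $i\coloneqq i_{r}$, which is precisely the index at which the sequence $(i_{j})$ crosses from the $\gamma$-part to the $\gamma'$-part.

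For each fixed $i\in\{0,1,\ldots,k\}$, the inner sum factorizes: the factors with $j=1,\ldots,r$ (with constraint $0=i_{0}\le\cdots\le i_{r}=i$) are exactly the expansion of $\Delta_{N,\gamma}(s(\gamma);e_{a_{1}}\cdots e_{a_{i}};t(\gamma))$ as given by Lemma \ref{lem:path_composition} applied to $\gamma$, and the factors with $j=r+1,\ldots,r+r'$ (with constraint $i=i_{r}\le\cdots\le i_{r+r'}=k$, reindexed) are exactly the expansion of $\Delta_{N,\gamma'}(s(\gamma');e_{a_{i+1}}\cdots e_{a_{k}};t(\gamma'))$. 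Summing over $i$ yields the claimed identity.

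There is no real obstacle here; the argument is purely a reindexing. The only point requiring care is the compatibility check at the very beginning, namely that the provision and order of $\gamma\gamma'$ decompose cleanly into those of $\gamma$ and $\gamma'$ with all of $S_{\gamma}$ preceding all of $S_{\gamma'}$, so that no cross-terms with some $n_{h}\in S_{\gamma}$ appearing after some $n_{h'}\in S_{\gamma'}$ can arise. Once this is noted, the remainder is bookkeeping.
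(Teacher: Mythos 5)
Your argument is correct and is essentially the paper's: the paper derives the corollary as an immediate consequence of Lemma \ref{lem:path_composition}, exactly by splitting the chain $0=i_{0}\le\cdots\le i_{r+r'}=k$ at $i=i_{r}$ and regrouping the two blocks of factors. Your extra remark that $S_{\gamma\gamma'}$ decomposes with all of $S_{\gamma}$ preceding $S_{\gamma'}$ (up to reindexing the second components) is the right compatibility check and matches what the paper leaves implicit.
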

\begin{theorem}[Reversal formula]\label{thm:reversal}
    Let $\gamma$ be a discrete path.
    Then, for $a_{1},\ldots,a_{k}\in D$, we have 
    \[\Delta_{N,\gamma}(s(\gamma);e_{a_{1}}\cdots e_{a_{k}};t(\gamma))=(-1)^{k}\Delta_{N,\gamma^{-1}}(t(\gamma);e_{a_{k}}\cdots e_{a_{1}};s(\gamma)).\]
\end{theorem}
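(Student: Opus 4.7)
The strategy is to produce an explicit order-reversing bijection between the provisions $S_{\gamma}$ and $S_{\gamma^{-1}}$ under which every discrete differential form picks up a sign, after which the reversal identity reduces to a change of summation variables in the defining sum.

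First, I would define $\phi\colon S_{\gamma}\to S_{\gamma^{-1}}$ piece by piece. Writing $\gamma=(\gamma_{1},\ldots,\gamma_{r})$ so that $\gamma^{-1}=(\gamma_{r}^{-1},\ldots,\gamma_{1}^{-1})$, the $i$-th piece of $\gamma$ corresponds to the $(r+1-i)$-th piece of $\gamma^{-1}$. On a straight piece, $\phi$ sends $(\tilde{n},i)$ to $(-\tilde{n},r+1-i)$; on a singular piece $\gamma_{i}=(x,a,b,\sigma)$, it sends $(\theta_{j}^{\sigma},i)$ to $(\theta_{N+1-j}^{-\sigma},r+1-i)$. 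Unwinding the definitions of $S_{\gamma_{i}^{-1}}$ (for instance, a positive straight $\gamma_{i}=(a,b)$ has inverse the negative straight piece $(b,a)$, and $\tilde{n}\mapsto-\tilde{n}$ is a bijection of the corresponding provisions) shows $\phi$ is a bijection, and the three clauses defining $<_{\gamma}$ translate directly into
\[n\le_{\gamma}n'\quad\Longleftrightarrow\quad\phi(n')\le_{\gamma^{-1}}\phi(n).\]

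Second, I would verify the pointwise identity $\omega_{a}^{N}(\phi(n))=-\omega_{a}^{N}(n)$ for every $n\in S_{\gamma}$ and $a\in D$. For $n$ in a positive straight piece, $\phi(n)$ lies in a negative straight piece, so $\omega_{a}^{N}(\phi(n))=1/(-\tilde{n}+aN)=-1/(\tilde{n}-aN)=-\omega_{a}^{N}(n)$; the negative straight case is symmetric. For a singular piece of sign $\sigma$ with first entry $x$, $\phi$ lands in a piece of sign $-\sigma$ with the same first entry, so $\omega_{x}^{N}(\phi(n))=(-\sigma)\theta/N=-\omega_{x}^{N}(n)$, while for $a\neq x$ both forms vanish.

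Finally, I would substitute $n_{i}=\phi^{-1}(m_{i})$ in the sum defining $\Delta_{N,\gamma}(s(\gamma);e_{a_{1}}\cdots e_{a_{k}};t(\gamma))$. The order-reversing property of $\phi$ turns the chain $n_{1}\le_{\gamma}\cdots\le_{\gamma}n_{k}$ into $m_{k}\le_{\gamma^{-1}}\cdots\le_{\gamma^{-1}}m_{1}$, each of the $k$ factors picks up a sign by the second step contributing an overall $(-1)^{k}$, and the relabeling $m_{j}'\coloneqq m_{k+1-j}$ rewrites the product $\prod_{i}\omega_{a_{i}}^{N}(m_{i})$ as $\prod_{j}\omega_{a_{k+1-j}}^{N}(m_{j}')$, identifying the remaining sum with $\Delta_{N,\gamma^{-1}}(t(\gamma);e_{a_{k}}\cdots e_{a_{1}};s(\gamma))$. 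The main obstacle is merely careful bookkeeping: keeping three sources of index change (inversion of the list of pieces, reflection within each piece via $\tilde{n}\mapsto-\tilde{n}$ or $j\mapsto N+1-j$, and the final relabeling of summation variables) consistent so that the reversed chain of $\le_{\gamma^{-1}}$'s and the reversed word $e_{a_{k}}\cdots e_{a_{1}}$ line up, but no genuinely delicate step arises once $\phi$ has been written down explicitly.
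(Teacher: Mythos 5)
Your proof is correct, but it follows a more unified route than the paper's. The paper first applies the path composition formula (Corollary \ref{cor:path_composition}) repeatedly to reduce the identity to the case of a single piece, then treats straight pieces by the substitution $n\mapsto -n$ and singular pieces separately by invoking the closed-form evaluation of Proposition \ref{prop:singular} (the Pochhammer formula) on both sides. You instead construct one global order-reversing bijection $\phi\colon S_{\gamma}\to S_{\gamma^{-1}}$ and check the pointwise sign flip $\omega_{a}^{N}(\phi(n))=-\omega_{a}^{N}(n)$ uniformly for all three types of pieces, so the whole theorem becomes a single change of summation variables; your index computations ($\tilde{n}\mapsto-\tilde{n}$ matching $S_{(b,a)}=\{-bN+1,\ldots,-aN-1\}$, and $j\mapsto N+1-j$ matching $S_{(x,N-b,N-a,-\sigma)}$) are consistent with the definitions of the provision and of the inverse piece. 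What your approach buys is that it needs neither the path composition formula nor the explicit evaluation on singular pieces, and it makes transparent that the reversal formula is purely combinatorial (an anti-isomorphism of ordered provisions intertwining the forms up to sign); what the paper's approach buys is economy, since it reuses two results already established for other purposes rather than introducing a new global bijection.
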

\begin{proof}
    Let $\gamma=(\gamma_{1},\ldots,\gamma_{r})$.
    Applying Corollary \ref{cor:path_composition} repeatedly, we have
    \[\Delta_{N,\gamma}(s(\gamma);e_{a_{1}}\cdots e_{a_{k}};t(\gamma))=\sum_{0=i_{0}\le i_{1}\le\cdots\le i_{r-1}\le i_{r}=k}\prod_{j=1}^{r}\Delta_{N,\gamma_{j}}(s(\gamma_{j});e_{a_{i_{j-1}+1}}\cdots e_{a_{i_{j}}};t(\gamma_{j}))\]
    and
    \[\Delta_{N,\gamma^{-1}}(t(\gamma);e_{a_{k}}\cdots e_{a_{1}};s(\gamma))=\sum_{0=i'_{0}\le i'_{1}\le\cdots\le i'_{r-1}\le i'_{r}=k}\prod_{j=1}^{r}\Delta_{N,\gamma_{r+1-j}^{-1}}(s(\gamma_{r+1-j});e_{a_{i'_{r+1-j}}}\cdots e_{a_{i'_{r-j}+1}};s(\gamma_{r+1-j})).\]
    Since we can give a one-to-one correspondence $(i_{1},\ldots,i_{r-1})\mapsto (i'_{1},\ldots,i'_{r})\coloneqq(k-i_{r-1},\ldots,k-i_{1})$ between the ranges of each sum above, it is sufficient to prove
    \[\Delta_{N,\gamma_{j}}(s(\gamma_{j});e_{a_{i_{j-1}+1}}\cdots e_{a_{i_{j}}};t(\gamma_{j}))=(-1)^{i_{j}-i_{j-1}}\Delta_{N,\gamma_{j}^{-1}}(t(\gamma_{j});e_{a_{i_{j}}}\cdots e_{a_{i_{j-1}+1}};s(\gamma_{j}))\]
    for $(i_{1},\ldots,i_{r-1})$ satisfying $0=i_{0}\le i_{1}\le\cdots\le i_{r-1}\le i_{r}=k$ and $j\in\{1,\ldots,r\}$.
    If $\gamma_{j}$ is a (positive or negative) straight path, we obtain this identity by substitution $n\mapsto -n$.
    When $\gamma_{j}=(x,a,b,\pm)$ is a singular path, from Proposition \ref{prop:singular} we have
    \begin{align}
        &\Delta_{N,\gamma_{j}}(s(\gamma_{j});e_{a_{i_{j-1}+1}}\cdots e_{a_{i_{j}}};t(\gamma_{j}))\\
        &=\begin{cases}
            \displaystyle\frac{(b-a)_{i_{j}-i_{j-1}}}{(i_{j}-i_{j-1})!}\left(\frac{\pm\theta}{N}\right)^{i_{j}-i_{j-1}} & \text{if }a_{i_{j-1}+1}=\cdots=a_{i_{j}}=x,\\
            0 & \text{otherwise},
        \end{cases}\\
        &=(-1)^{i_{j}-i_{j-1}}\begin{cases}
            \displaystyle\frac{((N-a)-(N-b))_{i_{j}-i_{j-1}}}{(i_{j}-i_{j-1})!}\left(\frac{\mp\theta}{N}\right)^{i_{j}-i_{j-1}} & \text{if }a_{i_{j-1}+1}=\cdots=a_{i_{j}}=x,\\
            0 & \text{otherwise},
        \end{cases}\\
        &=(-1)^{i_{j}-i_{j-1}}\Delta_{N,\gamma_{j}^{-1}}(t(\gamma_{j});e_{a_{i_{j}}}\cdots e_{a_{i_{j-1}+1}};s(\gamma_{j})).
    \end{align}
\end{proof}
\begin{lemma}\label{lem:simplest_convergent}
    Let $\gamma=(x,y)$ be a positive straight piece (regarded as a discrete path) and $a\in D$.
    Then we have
    \[\sum_{xN\le n\le yN}\frac{1}{|n-aN|}=O(\log N),\qquad\text{as }N\to\infty.\]
\end{lemma}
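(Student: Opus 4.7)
The plan is to split the sum into cases depending on whether $a$ coincides with an endpoint of the piece. Since $\gamma=(x,y)$ is a positive straight piece, the open real interval $(x,y)$ is disjoint from $D$ by definition, so any $a\in D$ either belongs to $\{x,y\}$ or does not lie in the closed interval $[x,y]\subseteq\bbC$ at all (this last case includes both the situations $a\in\bbR\setminus[x,y]$ and $a\notin\bbR$).

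In the first case, $a\notin\{x,y\}$, the quantity $c\coloneqq\inf_{t\in[x,y]}|t-a|$ is strictly positive and depends only on $a,x,y$, not on $N$. Hence for every integer $n$ with $xN\le n\le yN$ one has $|n-aN|=N\,|n/N-a|\ge cN$. The sum contains at most $(y-x)N+1$ terms, each bounded by $1/(cN)$, so the total is $O(1)$, which is certainly $O(\log N)$.

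In the second case, $a=x$ (the case $a=y$ is symmetric), we interpret the $n=aN$ term as omitted since the summand is undefined there; for the remaining indices $xN<n\le yN$ one has $|n-aN|=n-xN$, so the sum reduces to the truncated harmonic sum $\sum_{k=1}^{(y-x)N}k^{-1}=\log((y-x)N)+O(1)=O(\log N)$. Combining the two cases yields the bound. The main point is really just the harmonic-series estimate, together with the trivial uniform bound in the non-endpoint case, so there is essentially no obstacle beyond correctly separating the cases.
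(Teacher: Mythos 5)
Your proof is correct and follows essentially the same route as the paper: a uniform lower bound $|n-aN|\ge cN$ giving $O(1)$ when $a\notin\{x,y\}$ (the paper writes this bound as a case-by-case constant $\rho$ via real and imaginary parts, which amounts to your $\inf_{t\in[x,y]}|t-a|$), and the truncated harmonic sum giving $O(\log N)$ when $a\in\{x,y\}$.
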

\begin{proof}
    For the case where $a\notin\{x,y\}$, put
        \[\rho\coloneqq\begin{cases} \Re(a)-y & \text{if }\Re(a)>y,\\
            x-\Re(a) & \text{if }x>\Re(a),\\
            \Im(a) & \text{otherwise.}
        \end{cases}\] 
        Then this quantity does not depend on $N$ and $|n-aN|\ge\rho N>0$ always holds for an integer $xN<n<yN$.
        Thus we have
        \[\sum_{xN<n<yN}\frac{1}{|n-aN|}\le\frac{(y-x)N-1}{\rho N}.\]
    If $a=x$, we obtain
    \begin{align}
        \sum_{xN<n<yN}\frac{1}{|n-xN|}
        &=\sum_{0<n<(y-x)N}\frac{1}{n}\\
        &=\log N+O(1),\qquad\text{as }N\to\infty.
    \end{align}
    The estimation for the $a=y$ case is also obtained by a similar computation. 
\end{proof}
For treat more general convergences, we define an auxiliary algebra $\cP_{D}\coloneqq\bbQ\langle e_{a_{1},\ldots,a_{m}}\mid a_{i}\in D,~m\ge 1\rangle$ and auxiliary sums
\[\Delta_{N,\gamma}(s(\gamma);w;t(\gamma))\coloneqq\sum_{\substack{n_{1},\ldots,n_{k}\in S_{\gamma}\\ n_{1}\le_{\gamma}\cdots \le_{\gamma}n_{k}}}\prod_{i=1}^{k}\prod_{j=1}^{m_{i}}\omega_{a_{i,j}}(n_{i}),\]
for a straight piece $\gamma$ and $w=e_{a_{1,1},\ldots,a_{1,m_{1}}}\cdots e_{a_{k,1},\ldots,a_{k,m_{k}}}\in\cP_{D}$.
Under the same setting, we also define its variant as
\[|\Delta|_{N,\gamma}(s(\gamma);w;t(\gamma))\coloneqq\sum_{\substack{n_{1},\ldots,n_{k}\in S_{\gamma}\\ n_{1}\le_{\gamma}\cdots \le_{\gamma}n_{k}}}\prod_{i=1}^{k}\prod_{j=1}^{m_{i}}|\omega_{a_{i,j}}(n_{i})|.\]
\begin{lemma}\label{lem:convergence}
    Let $\gamma$ be a straight piece and $w=e_{a_{1,1},\ldots,a_{1,m_{1}}}\cdots e_{a_{k,1}}\cdots e_{a_{k,m_{k}}}\in\cP_{D}$.
    Put $x\coloneqq\widehat{s(\gamma)}$ and $y\coloneqq\widehat{t(\gamma)}$.
    Then we have the following:
    \begin{enumerate}
        \item As $N\to\infty$, the sums $\Delta_{N,\gamma}(s(\gamma);w;t(\gamma))$ and $|\Delta|_{N,\gamma}(s(\gamma);w;t(\gamma))$ are $O(\log^{\bullet}N)$.
        \item $\Delta_{N,\gamma}(s(\gamma);w;t(\gamma))=O(N^{-1}\log^{\bullet}N)$ as $N\to\infty$ if there exists $1\le i\le k$ and $1\le j<j'\le m_{i}$ such that $m_{i}\ge 2$ and $a_{i,j}\neq a_{i,j'}$.
        \item $\Delta_{N,\gamma}(s(\gamma);w;t(\gamma))=O(N^{-1}\log^{\bullet}N)$ as $N\to\infty$ if there exists $1\le i\le k$ and $1\le j<j'\le m_{i}$ such that $m_{i}\ge 2$ and $a_{i,j}=a_{i,j'}\in D\setminus\{x,y\}$.
        \item $\Delta_{N,\gamma}(s(\gamma);w;t(\gamma))=O(N^{-1}\log^{\bullet}N)$ as $N\to\infty$ if there exists $1\le i<i'\le k$, $1\le j\le m_{i}$ and $1\le j'<j''\le m_{i'}$ such that $m_{i'}\ge 2$, $a_{i',j'}=a_{i',j''}=x$ and $a_{i,j}\neq x$.
        \item $\Delta_{N,\gamma}(s(\gamma);w;t(\gamma))=O(N^{-1}\log^{\bullet}N)$ as $N\to\infty$ if there exists $1\le i'<i\le k$, $1\le j\le m_{i}$ and $1\le j'<j''\le m_{i'}$ such that $m_{i'}\ge 2$, $a_{i',j'}=a_{i',j''}=y$ and $a_{i,j}\neq y$.
    \end{enumerate}
\end{lemma}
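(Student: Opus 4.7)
The plan is to derive all five statements from a single \emph{single-variable} bound for sums of the form $\sum_{n\in S_\gamma}\prod_j|\omega_{b_j}^N(n)|$, combined (for parts (4) and (5)) with a monotonicity transfer along the ordering $\le_\gamma$. I would treat the positive straight case $\gamma=(x,y)$ in detail; the negative straight case reduces to it via the change of variables $n\mapsto -n$. Because the integrand of $|\Delta|_{N,\gamma}$ is non-negative, the ordering constraint $n_1\le_\gamma\cdots\le_\gamma n_k$ can be dropped to obtain the upper bound
\[|\Delta|_{N,\gamma}(s(\gamma);w;t(\gamma))\le\prod_{i=1}^{k}F(a_{i,1},\ldots,a_{i,m_i};N),\qquad F(b_1,\ldots,b_m;N)\coloneqq\sum_{n\in S_\gamma}\prod_{j=1}^{m}|\omega_{b_j}^N(n)|,\]
and the problem reduces to analysing each factor $F$.

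The key auxiliary claim I would establish is that $F(b_1,\ldots,b_m;N)=O(\log^{\bullet}N)$ in all cases, and moreover $F(b_1,\ldots,b_m;N)=O(N^{-1}\log^{\bullet}N)$ whenever $m\ge 2$ and the multiset $\{b_1,\ldots,b_m\}$ is not of the form $\{x,\ldots,x\}$ or $\{y,\ldots,y\}$. This follows from a short case split on how many of the $b_j$ lie in $\{x\}$, $\{y\}$, and $D\setminus\{x,y\}$, using: (i) $|\omega_b^N(n)|\le C_b/N$ uniformly when $b\notin\{x,y\}$; (ii) Lemma~\ref{lem:simplest_convergent} for a single boundary factor; (iii) the trivial bound $\sum_n 1/(n-xN)^p=O(1)$ for $p\ge 2$; and (iv) the partial-fraction identity $1/((n-xN)(yN-n))=((y-x)N)^{-1}(1/(n-xN)+1/(yN-n))$, which gives $O(N^{-1}\log N)$ when both $x$ and $y$ appear among the $b_j$.

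Parts (1)--(3) then fall out of the product bound: (1) multiplies $k$ factors each of size $O(\log^{\bullet}N)$; in (2) the $i$-th multiset is neither $\{x,\ldots,x\}$ nor $\{y,\ldots,y\}$ because two of its entries differ, so $F_i=O(N^{-1}\log^{\bullet}N)$; in (3) a pair of equal letters in $D\setminus\{x,y\}$ already contributes a uniform factor $O(N^{-2})$ inside the $i$-th sum, forcing $F_i=O(N^{-1}\log^{\bullet}N)$. In (4) and (5) the ordering is used substantively. For (4), $i<i'$ gives $n_i\le_\gamma n_{i'}$, and since $|\omega_x^N(n)|=1/(n-xN)$ is decreasing on $S_\gamma$, one may replace $|\omega_x^N(n_{i'})|^2$ by $|\omega_x^N(n_i)|\cdot|\omega_x^N(n_{i'})|$; after this substitution the ordering may again be dropped, and the $i$-th factor becomes $F(a_{i,1},\ldots,a_{i,m_i},x;N)$. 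The hypothesis $a_{i,j}\ne x$ guarantees that this enlarged multiset contains both $x$ and a non-$x$ letter, so the auxiliary claim delivers $O(N^{-1}\log^{\bullet}N)$, while the other $F$'s remain $O(\log^{\bullet}N)$. Case (5) is analogous with $|\omega_y^N(n)|=1/(yN-n)$, which is increasing, and the transfer going from $n_{i'}$ to $n_i$ via $n_{i'}\le_\gamma n_i$.

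The main obstacle is the single-variable estimate, especially the partial-fractions handling of the mixed $\{x,y\}$ case, together with the careful check in (4) and (5) that the monotonicity transfer always produces a genuinely mixed multiset, which is exactly where the hypothesis on $a_{i,j}$ is invoked.
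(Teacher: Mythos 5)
Your proposal is correct and follows essentially the same route as the paper: drop the ordering and invoke the single-variable estimate (Lemma \ref{lem:simplest_convergent}) for (1), extract a factor of $N^{-1}$ from a slot carrying two letters that are not both equal to the same endpoint via partial fractions or the uniform bound $|n-aN|\ge\rho N$ for (2)--(3), and use the monotonicity of $n\mapsto|\omega_x^N(n)|$ (resp.\ $|\omega_y^N(n)|$) along $\le_\gamma$ to transfer one boundary factor from slot $i'$ to slot $i$ before reducing to the previous cases for (4)--(5). Your packaging through the unified single-variable quantity $F$ and the positive partial-fraction identity for the mixed $\{x,y\}$ multiset is a slightly cleaner bookkeeping of the same argument (it keeps everything at the level of $|\Delta|$, whereas the paper formally cites its case (2), a statement about $\Delta$, when bounding $|\Delta|$), but the substance is identical.
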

\begin{proof}
    Arguments similar to the below proofs for the positive straight case also works for the negative straight case.
    Thus we deal with only the positive straight case. 
    For $1\le i\le k$ and $1\le j\le m_{i}$ define
    \[
        \ba_{i}\coloneqq (a_{i,1},\ldots,a_{i,m_{i}})
        \]
        and
    \[\ba_{i}\langle j\rangle\coloneqq (a_{i,1},\ldots,a_{i,j-1},a_{i,j+1},\ldots,a_{i,m_{i}})\]
    \begin{enumerate}
        \item For the case where $\gamma$ is positive straight, it is sufficient that the latter case is proven by the triangle inequality.
        From Lemma \ref{lem:simplest_convergent} we have
        \[|\Delta|_{N,\gamma}(s(\gamma);w;t(\gamma))\le\sum_{\substack{xN<n_{i,j}<yN\\ 1\le i\le k\\ 1\le j\le m_{i}}}\prod_{j=1}^{m}\frac{1}{|n_{i,j}-a_{i,j}N|}=O(\log^{mk}N),\qquad\text{as }N\to\infty.\]
        \item By the partial fraction decomposition
        \[\frac{1}{n_{i}-a_{i,j}N}\frac{1}{n_{i}-a_{i,j'}N}=\frac{1}{N(a_{i,j}-a_{i,j'})}\left(\frac{1}{n_{i}-a_{i,j}N}-\frac{1}{n_{i}-a_{i,j'}N}\right),\]
        we obtain
        \begin{align}
            \Delta_{N,\gamma}(s(\gamma);w;t(\gamma))
            &=\sum_{xN<n_{1}\le\cdots\le n_{k}<yN}\prod_{h=1}^{k}\prod_{h'=1}^{m_{i}}\frac{1}{n_{h}-a_{h,h'}N}\\
            &=\begin{multlined}[t]
                \frac{1}{N(a_{i,j}-a_{i,j'})}\left(\Delta_{N,\gamma}(s(\gamma);e_{\ba_{1}}\cdots e_{\ba_{i-1}}e_{\ba_{i}\langle j\rangle}e_{\ba_{i+1}}\cdots e_{\ba_{k}};t(\gamma))\right.\\
                \left.-\Delta_{N,\gamma}(s(\gamma);e_{\ba_{1}}\cdots e_{\ba_{i-1}}e_{\ba_{i}\langle j'\rangle}e_{\ba_{i+1}}\cdots e_{\ba_{k}};t(\gamma))\right).
            \end{multlined}
        \end{align}
        Then the desired result follows from this equality and (1).
        If $\gamma$ is singular, 
        \item Put
        \[\rho\coloneqq\begin{cases} \Re(a_{i,j})-y & \text{if }\Re(a_{i,j})>y,\\
            x-\Re(a_{i,j}) & \text{if }x>\Re(a_{i,j}),\\
            \Im(a_{i,j}) & \text{otherwise.}
        \end{cases}\] 
        Then $\rho$ is independent of $N$ and the inequality $|n_{i}-a_{i,j}N|\ge\rho N>0$ always holds for an integer $xN<n_{i,j}<yN$.
        Thus we have
        \begin{align}
            |\Delta_{N,\gamma}(s(\gamma);w;t(\gamma))|
            &\le|\Delta|_{N,\gamma}(s(\gamma);w;t(\gamma))\\
            &\le\frac{1}{\rho N}|\Delta|_{N,\gamma}(s(\gamma);e_{\ba_{1}}\cdots e_{\ba_{i-1}}e_{\ba_{i}\langle j\rangle}e_{\ba_{i+1}}\cdots e_{\ba_{k}};t(\gamma))\\
            &\stackrel{(1)}{=}O(N^{-1}\log^{\bullet}N)\qquad\text{as }N\to\infty.
        \end{align}
        \item We have
        \begin{align}
            |\Delta|_{N,\gamma}(s(\gamma);w;t(\gamma))
            &=\sum_{xN<n_{1}\le\cdots\le n_{k}<yN}\prod_{\substack{1\le h\le k\\ 1\le h'\le m_{h}}}\frac{1}{|n_{h}-a_{h,h'}N|}\\
            &\le\sum_{xN<n_{1}\le\cdots\le n_{k}<yN}\frac{1}{|n_{i}-xN|}\prod_{\substack{1\le h\le k\\ 1\le h'\le m_{h}\\ (h,h')\neq (i',j')}}\frac{1}{|n_{h}-a_{h,h'}N|}\\
            &=|\Delta|_{N,\gamma}(s(\gamma);e_{\ba_{1}}\cdots e_{\ba_{i-1}}e_{\ba_{i},x}e_{\ba_{i+1}}\cdots e_{\ba_{i'-1}}e_{\ba_{i'}\langle j'\rangle}e_{\ba_{i'+1}}\cdots e_{\ba_{k}};t(\gamma))
    \end{align}
        since $|n_{i'}-xN|\ge |n_{i}-xN|$.
        Then we can use (2) for the most right-hand side above.
        \item We have
        \begin{align}
            |\Delta|_{N,\gamma}(s(\gamma);w;t(\gamma))
            &=\sum_{xN<n_{1}\le\cdots\le n_{k}<yN}\prod_{\substack{1\le h\le k\\ 1\le h'\le m_{h}}}\frac{1}{|n_{h}-a_{h,h'}N|}\\
            &\le\sum_{xN<n_{1}\le\cdots\le n_{k}<yN}\frac{1}{|n_{i}-yN|}\prod_{\substack{1\le h\le k\\ 1\le h'\le m_{h}\\ (h,h')\neq (i',j')}}\frac{1}{|n_{h}-a_{h,h'}N|}\\
            &=|\Delta|_{N,\gamma}(s(\gamma);e_{\ba_{1}}\cdots e_{\ba_{i'-1}}e_{\ba_{i'}\langle j'\rangle}e_{\ba_{i'+1}}\cdots e_{\ba_{i-1}}e_{\ba_{i},y}e_{\ba_{i+1}}\cdots e_{\ba_{k}};t(\gamma))
    \end{align}
        since $|n_{i'}-yN|\ge |n_{i}-yN|$.
        Then we can use (2) for the most right-hand side above.
    \end{enumerate}
\end{proof}
From here we use the $O$-notation for also elements of $\bbC[\theta]$, that is, we write as
\[\sum_{i=0}^{k}a_{i,N}\theta^{i}=O(b_{N}),\qquad\text{as }N\to\infty,\]
for $k\in\bbZ_{\ge 0}$ and sequences $a_{i,-}\colon\bbZ_{\ge 1}\to\bbC$ ($0\le i\le k$) satisfying $a_{i,N}=O(b_{N})$ as $N\to\infty$ for every $0\le i\le k$. 
\begin{remark}
    The estimations (2)--(5) appearing in Lemma \ref{lem:convergence} hold for also the case where $\gamma$ is singular.
    Indeed, for $\gamma=(x,a,b,\pm)$ we easily obtain
    \[\Delta_{N,\gamma}((s,\gamma);w;(t,\gamma))=\begin{cases}
    \displaystyle\frac{(b-a)_{k}}{k!}\left(\frac{\pm\theta}{N}\right)^{mk} & \text{if every }a_{i,j}\text{ is }x,\\
    0 & \text{otherwise.}
    \end{cases}\] 
\end{remark}
\begin{definition}
    Let $\gamma=(\gamma_{1},\ldots,\gamma_{r})$ be a discrete path and $w=e_{a_{1}}\cdots e_{a_{k}}$ and $w'=e_{b_{1}}\cdots e_{b_{l}}$ elements of $\fH_{D}$.
    Then we say that a pair $(w,w')\in(\fH_{D})^{2}$ is \emph{$\gamma$-shuffleable} if, for every increasing sequences $0=i_{0}\le i_{1}\le\cdots\le i_{r}=k$ and $0=j_{0}\le j_{1}\le\cdots\le j_{r}=l$ and $1\le c\le r$, either $a_{i_{c-1}+1}$ or $b_{j_{c-1}+1}$ (resp.~$a_{i_{c}}$ or $b_{j_{c}}$) does \emph{not} coincide with $\widehat{s(\gamma_{c})}$ (resp.~$\widehat{t(\gamma_{c})}$).
\end{definition}
We remark that, if $\gamma$ is a piece, the above condition asserts that $\{a_{1},b_{1}\}\neq\{\widehat{s(\gamma)}\}$ and $\{a_{k},b_{l}\}\neq\{\widehat{t(\gamma)}\}$.
\begin{theorem}[Asymptotic shuffle product formula]\label{thm:spf}
    Let $\gamma$ be a discrete path and $w=e_{a_{1}}\cdots e_{a_{k}}$ and $w'=e_{b_{1}}\cdots e_{b_{l}}$ elements of $\fH_{D}$.
    Assume that $(w,w')$ is $\gamma$-shuffleable.
    Then we have
    \begin{equation}\label{eq:spf}
        \Delta_{N,\gamma}(s(\gamma);w;t(\gamma))\Delta_{N,\gamma}(s(\gamma);w';t(\gamma))=\Delta_{N,\gamma}(s(\gamma);w\sh w';t(\gamma))+O(N^{-1}\log^{\bullet}N),\qquad\text{as }N\to\infty.
    \end{equation}
\end{theorem}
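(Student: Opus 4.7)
The plan has three main stages. First, I would reduce the general case to a single piece by induction on the number $r$ of pieces of $\gamma=(\gamma_{1},\ldots,\gamma_{r})$. Applying the path composition formula (Corollary~\ref{cor:path_composition}) to $\Delta_{N,\gamma}(w)$, $\Delta_{N,\gamma}(w')$ and $\Delta_{N,\gamma}(w\sh w')$, and using that the deconcatenation coproduct $\Delta^{\sh}$ is an algebra homomorphism for the shuffle product (Proposition~\ref{prop:hopf}), the desired identity for $\gamma$ decomposes into products of the analogous identities for $\gamma_{1}\cdots\gamma_{r-1}$ and $\gamma_{r}$. The definition of $\gamma$-shuffleability is tailored precisely so that every pair of sub-words arising from a path-composition splitting is itself shuffleable for the corresponding sub-path, so the induction step goes through. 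The cross terms of the form (main term)$\cdot O(N^{-1}\log^{\bullet}N)$ are preserved in the error, because the main terms themselves are at worst $O(\log^{\bullet}N)$ by Lemma~\ref{lem:convergence}(1).

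Second, I would handle the base case of a single piece $\gamma$. The singular sub-case is essentially automatic: Proposition~\ref{prop:singular} shows $\Delta_{N,\gamma}(\cdot)$ vanishes unless every letter equals $e_{\widehat{s(\gamma)}}=e_{\widehat{t(\gamma)}}$, and the shuffleability hypothesis then forces at least one of $w,w'$ to be empty, reducing the claim to a tautology. For the straight sub-case, I would expand $\Delta_{N,\gamma}(w)\Delta_{N,\gamma}(w')$ as an iterated sum over pairs of weakly increasing index sequences $(n_{1},\ldots,n_{k})$ and $(m_{1},\ldots,m_{l})$ in $S_{\gamma}$, and reorganize it by the ``collision pattern'' $(\alpha(v),\beta(v))_{v\in S_{\gamma}}$ recording the multiplicities of each value in the two sequences. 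Pairs with $\alpha(v)\beta(v)=0$ for every $v$ are in canonical bijection with $(k,l)$-shuffles and contribute exactly $\Delta_{N,\gamma}(w\sh w')$; the remaining pairs, by inclusion--exclusion over the set of cross-collision positions, produce an integer linear combination of quantities $\Delta_{N,\gamma}(s(\gamma);\tilde w;t(\gamma))$ indexed by words $\tilde w\in\cP_{D}$ carrying at least one merged letter $e_{a_{i},b_{j}}$ of length $\ge 2$.

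Third, each such auxiliary term is bounded by the appropriate part of Lemma~\ref{lem:convergence}: a merged letter $e_{a_{i},b_{j}}$ with $a_{i}\neq b_{j}$ is $O(N^{-1}\log^{\bullet}N)$ by part~(2); with $a_{i}=b_{j}\in D\setminus\{\widehat{s(\gamma)},\widehat{t(\gamma)}\}$ by part~(3); with $a_{i}=b_{j}=\widehat{s(\gamma)}$ by part~(4); and with $a_{i}=b_{j}=\widehat{t(\gamma)}$ by part~(5). The $\gamma$-shuffleability hypothesis is exactly what is needed to invoke parts~(4) and~(5): if every letter of $w$ and $w'$ preceding a $\widehat{s(\gamma)}$-collision were also $\widehat{s(\gamma)}$, then $a_{1}=b_{1}=\widehat{s(\gamma)}$, contradicting shuffleability at the initial point; the dual argument at $\widehat{t(\gamma)}$ handles the $y$-collisions.

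The main obstacle is the combinatorial bookkeeping of the second stage: writing the cross-collision contributions explicitly via inclusion--exclusion as a signed sum of $\Delta_{N,\gamma}(\cdot;\tilde w;\cdot)$ with $\tilde w\in\cP_{D}$, and then verifying case by case that every resulting $\tilde w$ satisfies the hypothesis of one of the parts~(2)--(5) of Lemma~\ref{lem:convergence} under the $\gamma$-shuffleability assumption, with no residual ``bad'' terms of size $O(\log^{\bullet}N)$ escaping the bound.
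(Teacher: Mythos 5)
Your proposal is correct and follows essentially the same route as the paper's proof: reduction to a single piece via the path composition formula and the Hopf-algebra compatibility of the deconcatenation coproduct with the shuffle product, then a collision (quasi-shuffle) decomposition of the product over a single piece whose merged-letter terms are bounded by parts (2)--(5) of Lemma \ref{lem:convergence}, with $\gamma$-shuffleability invoked exactly as you describe to enable (4) and (5). The only cosmetic differences are that you organize the multi-piece reduction as an induction on the number of pieces rather than applying the iterated coproduct at once, and you treat the singular-piece base case directly via Proposition \ref{prop:singular} instead of the remark following Lemma \ref{lem:convergence}.
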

\begin{proof}
    The proof proceeds in a way similar to \cite[Proposition 2.4]{seki24}.
    First we consider the case where $\gamma$ is a piece.
    By definition we have
    \begin{align}
        &\Delta_{N,\gamma}(s(\gamma);w;t(\gamma))\Delta_{N,\gamma}(s(\gamma);w';t(\gamma))\\
        &=\sum_{\substack{m_{1},\ldots,m_{k},n_{1},\ldots,n_{l}\in S_{\gamma}\\ m_{1}\le_{\gamma}\cdots\le_{\gamma}m_{k}\\ n_{1}\le_{\gamma}\cdots\le_{\gamma}n_{l}}}\left(\prod_{i=1}^{k}\omega_{a_{i}}^{N}(m_{i})\right)\left(\prod_{j=1}^{l}\omega_{b_{j}}^{N}(n_{j})\right)\\
        &=\Delta_{N,\gamma}((s,\gamma);w\sh w';(t,\gamma))+\sum_{\substack{m_{1},\ldots,m_{k},n_{1},\ldots,n_{l}\in S_{\gamma}\\ m_{1}\le_{\gamma}\cdots\le_{\gamma}m_{k}\\ n_{1}\le_{\gamma}\cdots\le_{\gamma}n_{l}\\ \exists(i,j),~m_{i}=n_{j}}}\left(\prod_{i=1}^{k}\omega_{a_{i}}^{N}(m_{i})\right)\left(\prod_{j=1}^{l}\omega_{b_{j}}^{N}(n_{j})\right).
    \end{align}
    Focusing on the second term on the right-hand side, we see that $\Delta_{N,\gamma}(s(\gamma);w;t(\gamma))\Delta_{N,\gamma}(s(\gamma);w';t(\gamma))-\Delta_{N,\gamma}((s,\gamma);w\sh w';(t,\gamma))$ can be decomposed to a finite sum of $\Delta_{N,\gamma}$ values satisfying one of the conditions in (2)--(5) of Lemma \ref{lem:convergence}.
    Indeed, we can use (2) if $a_{i}\neq b_{j}$, (3) if $a_{i}=b_{j}\notin\{\widehat{s(\gamma)},\widehat{t(\gamma)}\}$, (4) and the assumption $a_{1}\neq\widehat{s(\gamma)}$ if $a_{i}=b_{j}=\widehat{s(\gamma)}$ or (5) and the assumption $b_{l}\neq\widehat{t(\gamma)}$ if $a_{i}=b_{j}=\widehat{t(\gamma)}$.
    Thus \eqref{eq:spf} holds if $\gamma$ is a piece.
    Next we deal with the general case.
    We use the temporary notation $f_{i}(X)\coloneqq\Delta_{N,\gamma_{i}}(s(\gamma_{i});X;t(\gamma_{i}))$ for $X\in\fH_{D}$.
    Using Corollary \ref{cor:path_composition}, we have
    \[\Delta_{N,\gamma}(s(\gamma);w;t(\gamma))\Delta_{N,\gamma}(s(\gamma);w';t(\gamma))
    =\sum_{\substack{0=i_{0}\le i_{1}\le\cdots\le i_{r}=k\\ 0=j_{0}\le j_{1}\le\cdots\le j_{r}=l}}\prod_{c=1}^{r}\left(f_{c}(e_{a_{i_{c-1}+1}}\cdots e_{a_{i_{c}}})f_{c}(e_{b_{j_{c-1}+1}}\cdots e_{b_{j_{c}}})\right).\]
    From the assumption that $(w,w')$ is $\gamma$-shuffleable and the piecewise result, we have 
    \[f_{c}(e_{a_{i_{c-1}+1}}\cdots e_{a_{i_{c}}})f_{c}(e_{b_{j_{c-1}+1}}\cdots e_{b_{j_{c}}})=f_{c}(e_{a_{i_{c-1}+1}}\cdots e_{a_{i_{c}}}\sh e_{b_{j_{c-1}+1}}\cdots e_{b_{j_{c}}})+O(N^{-1}\log^{\bullet}N),\qquad\text{as }N\to\infty.\]
    By this fact and Proposition \ref{prop:hopf}, we obtain
    \begin{align}
        &(\mu_{\bbC}\circ(f_{1}\otimes\cdots\otimes f_{r})\circ\underbrace{\Delta^{\sh}\circ\cdots\circ\Delta^{\sh}}_{r-1})(w\sh w')\\
        &=\sum_{\substack{w_{1}\cdots w_{r}=w\\ w'_{1}\cdots w'_{r}=w'}}(\mu_{\bbC}\circ(f_{1}\otimes\cdots\otimes f_{r}))((w_{1}\otimes\cdots\otimes w_{r})\sh (w'_{1}\otimes\cdots\otimes w'_{r}))\\
        &=\sum_{\substack{w_{1}\cdots w_{r}=w\\ w'_{1}\cdots w'_{r}=w'}}(\mu_{\bbC}\circ(f_{1}\otimes\cdots\otimes f_{r}))((w_{1}\sh w'_{1})\otimes\cdots\otimes (w_{r}\sh w'_{r}))\\
        &=\sum_{\substack{0=i_{0}\le i_{1}\le\cdots\le i_{r}=k\\ 0=j_{0}\le j_{1}\le\cdots\le j_{r}=l}}\prod_{c=1}^{r}\left(f_{c}(e_{a_{i_{c-1}+1}}\cdots e_{a_{i_{c}}}\sh e_{b_{j_{c-1}+1}}\cdots e_{b_{j_{c}}})\right)\\
        &=\Delta_{N,\gamma}(s(\gamma);w;t(\gamma))\Delta_{N,\gamma}(s(\gamma);w';t(\gamma))+O(N^{-1}\log^{\bullet}N),\qquad\text{as }N\to\infty,
    \end{align}
    where we use the symbol $\sh$ for the product on $(\fH_{D})^{\otimes r}$ induced from $\sh$ by abuse of notation and $\mu_{\bbC}$ for the product for $\bbC[\theta]$.
    The leftmost side is $\Delta_{N,\gamma}(s(\gamma);w\sh w';t(\gamma))$ by the path composition formula.
    This completes the proof.
\end{proof}
\section{Recovering formulas for usual iterated integrals}\label{sec:recovering}
In this section, we deal with the following types of paths:
\begin{definition}
    \begin{enumerate}
        \item Define a continuous path $\beta$ to be $\dch\cdot\alpha\cdot\dch^{-1}$, where $\alpha$ is the path which starts from $(-1)_{1}$, circles $1$ counterclockwise and ends on $(-1)_{1}$.
        \item Define a discrete path $\beta^{\Delta}=(\gamma_{1},\gamma_{2},\gamma_{3})$, where $\gamma_{1}=(0,1)$, $\gamma_{2}=(1,0,N,+)$ and $\gamma_{3}=(1,0)$. 
    \end{enumerate}
    Moreover, we also write the straight piece $(0,1)$ as $\dch^{\Delta}$.
\end{definition}
Hereafter we assume that $D$ contains $0$ and $1$ and that $D\setminus\{0\}\subseteq M'\coloneqq\{z\mid |z|\ge 1\}\subseteq\bbC$.
Put 
\[\cI'\coloneqq\left\{\vv{z_{1},\ldots,z_{r}}{k_{1},\ldots,k_{r}}\left|\begin{array}{c}r\ge 1,\\k_{1},\ldots,k_{r}\in\bbZ_{\ge 1},\\ z_{1},\ldots,z_{r}\in D,\\ (k_{r},z_{r})\neq (1,1)\end{array}\right.\right\}.\]
\subsection{Preliminaries: harmonic product}
Let $\fH^{1}\coloneqq\bbQ\oplus e_{1}\bbQ\langle e_{0},e_{1}\rangle$ and $\fH^{0}\coloneqq\bbQ\oplus e_{1}\bbQ\langle e_{0},e_{1}\rangle e_{0}$.
Endow $\fH^{1}$ with a product $\ast$ by the following rules: $1\ast w=w\ast 1=w$ for every $w\in\fH^{1}$ and
\[e_{a}w\ast e_{b}w'\coloneqq e_{ab}(w\ast e_{b}w'+e_{a}w\ast w'-e_{0}(w\ast w'))\]
for $a,b\in\{0,1\}$ and $w,w'\in\fH^{1}$.
This provides a $\bbQ$-algebra structure on $\fH^{1}$ and its subalgebra structure on $\fH^{0}$.
\begin{proposition}[Hoffman {\cite{hoffman97}}]\label{prop:hoffman}
    Every $w\in\fH^{1}$ has the expression
    \[w=\sum_{i=0}^{n}w_{i}\ast e_{1}^{i},\]
    by the unique non-negative integer $n$ and $w_{i}\in\fH^{0}$.
\end{proposition}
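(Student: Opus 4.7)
The plan is an induction on the \emph{height} $h(w)$, defined on a word in $\bbQ\langle e_{0},e_{1}\rangle$ as the length of its maximal trailing run of $e_{1}$'s, and extended to $\fH^{1}$ as the maximum of $h$ over words appearing with non-zero coefficient. Any non-zero word in $\fH^{1}$ admits a unique presentation $v\,e_{1}^{h}$ (concatenation) with $v$ empty or ending in $e_{0}$, hence $v\in\fH^{0}$. For the base case $h(w)=0$ we have $w\in\fH^{0}$ itself and the decomposition is just $w=w\ast 1$ with $n=0$.

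The core ingredient is the following key lemma: for any word $v$ ending in $e_{0}$ or equal to $1$, and any $n\ge 0$,
\[
v\ast e_{1}^{n}=v\,e_{1}^{n}+R_{v,n},
\]
where every word in $R_{v,n}$ has height strictly less than $n$ (the symbol $e_{1}^{n}$ is concatenation). I would prove this by a nested induction on $n$ and on the length of $v$, unfolding the recursive rule $e_{a}w\ast e_{b}w'=e_{ab}(w\ast e_{b}w'+e_{a}w\ast w'-e_{0}(w\ast w'))$. Among the three summands on the right, only the ``diagonal'' one in which the trailing $e_{1}$'s of the second factor remain at the tail contributes a word with the full trailing block $e_{1}^{n}$ and coefficient $1$; the other summand and the correction $-e_{0}(w\ast w')$ either insert an $e_{1}$ inside $v$ or prepend an $e_{0}$, each breaking the trailing run so that their contributions land at height $<n$.

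Granted the key lemma, existence for $w$ with $h(w)=n\ge 1$ follows by writing $w=v\,e_{1}^{n}+w^{\flat}$ with $v\in\fH^{0}$ and $h(w^{\flat})<n$, replacing $v\,e_{1}^{n}=v\ast e_{1}^{n}-R_{v,n}$, and applying the induction hypothesis to the height-$(<n)$ element $w^{\flat}-R_{v,n}$; this gives $w=v\ast e_{1}^{n}+\sum_{i<n}w_{i}\ast e_{1}^{i}$ with $w_{i}\in\fH^{0}$. For uniqueness, suppose $\sum_{i=0}^{n}w_{i}\ast e_{1}^{i}=0$ with $w_{i}\in\fH^{0}$ and $w_{n}\neq 0$. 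Since each $w_{i}\in\fH^{0}$ has $h(w_{i})=0$, the key lemma gives that the height-exactly-$i$ part of $w_{i}\ast e_{1}^{i}$ is precisely $w_{i}\,e_{1}^{i}$; extracting the height-$n$ part of the whole sum yields $w_{n}\,e_{1}^{n}=0$, contradicting $w_{n}\neq 0$. Iterating produces $w_{0}=\cdots=w_{n}=0$.

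The main obstacle is the key lemma itself: one must verify that among the many terms generated by the recursive expansion of $v\ast e_{1}^{n}$, the unique words realising the full trailing block $e_{1}^{n}$ arise from a single branch of the recursion and appear with coefficient $1$. In particular, the nested $-e_{0}(w\ast w')$ corrections can, after further recursive expansions, produce rather long trailing runs of $e_{1}$, and one must control that these always fall short by at least one. This careful bookkeeping of the trailing $e_{1}$-run through the recursion is the only non-trivial step; everything else is routine reduction by induction.
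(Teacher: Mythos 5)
The paper does not prove this proposition; it is quoted directly from Hoffman's work \cite{hoffman97}, so there is no internal proof to compare against. Your argument---filtering $\fH^{1}$ by the length of the trailing $e_{1}$-run and extracting the leading term of $v\ast e_{1}^{n}$---is correct and is essentially the standard proof from the cited literature: for the signed harmonic product used here, the branch $e_{a}(w\ast e_{b}w')$ of the recursion is the only one that reproduces the concatenation $v\,e_{1}^{n}$ (with coefficient $1$), while the branches $e_{a}w\ast w'$ and $-e_{0}(w\ast w')$ strictly decrease the exponent $n$ and hence the trailing run, which is exactly the bookkeeping your key lemma requires.
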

The constant term $w_{0}$ on the above proposition is denoted by $\reg_{\ast}(w)$.
Moreover we introduce a variant of harmonic regularization in \cite{ikz06}.
\begin{definition}
    Assume that a sequence $a\colon\bbZ_{\ge 1}\to\bbC$ has the asymptotic expansion of type
    \[a_{N}=P(\log N+\gamma)+O(N^{-1}\log^{\bullet}N),\qquad\text{as }N\to\infty,\]
    where $P(X)\in\bbC[X]$ and $\gamma$ denotes the Euler constant.
    Note that such an expansion is unique if its exists.
    Then we define
    \[\Reg_{N\to\infty}^{*}a_{N}\coloneqq P(0).\]
    Moreover, for $f\in(\bbC^{\bbZ_{\ge 1}})[\theta]$, we extend the definition as
    \[\Reg_{N\to\infty}^{\ast}f(N,\theta)\coloneqq\sum_{i=0}^{n}\left(\Reg_{N\to\infty}^{\ast}a_{N,i}\right)\theta^{i},\]
    where we write the coefficients as $f(N,\theta)=\sum_{i=0}^{n}a_{N,i}\theta^{i}$ and note that the degree $n$ does not depend on $N$. 
\end{definition}
\begin{remark}
    The operation $\Reg_{N\to\infty}^{\ast}$ is compatible with the addition and product.
    Furthermore, when $\{a_{n}\}_{n}$ converges, $\Reg_{N\to\infty}^{*}a_{N}=\lim_{N\to\infty}a_{N}$ holds.
\end{remark}
\begin{definition}
    For $w\in\fH_{D}$ and $\gamma\in\{\dch,\beta\}$, we define
    \[I^{\Delta}_{\gamma}(w)\coloneqq\Reg_{N\to\infty}^{\ast}\Delta_{N,\gamma^{\Delta}}(s(\gamma);w;t(\gamma)).\]
\end{definition}
\begin{lemma}\label{lem:harmonic_mzv}
    Let $k_{1},\ldots,k_{r}\ge 1$.
    Then we have
    \begin{equation}\label{eq:harmonic}
        I^{\Delta}_{\dch}(e_{1}e_{0}^{k_{1}-1}\cdots e_{1}e_{0}^{k_{r}-1})=(-1)^{r}\zeta^{\ast}(k_{1},\ldots,k_{r}),
    \end{equation}
    where $\zeta^{\ast}$ stands for the harmonic regularization of multiple zeta values:
    \[\zeta^{\ast}(k_{1},\ldots,k_{r})\coloneqq\Reg_{N\to\infty}^{*}\sum_{0<n_{1}<\cdots<n_{r}<N}\frac{1}{n_{1}^{k_{1}}\cdots n_{r}^{k_{r}}}.\]
\end{lemma}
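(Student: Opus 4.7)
The strategy is to reduce the identity to the admissible case and then invoke \cite[Theorem 1.3]{msw24}. First, by factoring out the signs from the $r$ occurrences of $\omega_{1}^{N}(n)=1/(n-N)$, I would rewrite
\[\Delta_{N,\dch^{\Delta}}(s(\dch);e_{1}e_{0}^{k_{1}-1}\cdots e_{1}e_{0}^{k_{r}-1};t(\dch))=(-1)^{r}\sum_{1\le n_{1}\le\cdots\le n_{k}\le N-1}\prod_{i\in S'}\frac{1}{N-n_{i}}\prod_{i\notin S'}\frac{1}{n_{i}},\]
where $S'=\{1,k_{1}+1,\ldots,k_{1}+\cdots+k_{r-1}+1\}$ records the positions of the $e_{1}$'s. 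The goal becomes to match, up to an $O(N^{-1}\log^{\bullet}N)$ error, this sum with $\zeta_{N}(k_{1},\ldots,k_{r})$.

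Next, I would split the weak-inequality sum at the block-boundary positions $i=k_{1},k_{1}+k_{2},\ldots,k_{1}+\cdots+k_{r-1}$ into ``strict at all boundaries'' and ``equal at some boundary''. The strict portion is precisely the left-hand side of the MSW identity, hence equals $\zeta_{N}(k_{1},\ldots,k_{r})$. For a boundary between a trailing $e_{0}$ (when $k_{j}\ge 2$) and the next $e_{1}$, the equality case produces a factor $\frac{1}{n(N-n)}=\frac{1}{N}\bigl(\frac{1}{n}+\frac{1}{N-n}\bigr)$; combined with Lemma \ref{lem:simplest_convergent} and Lemma \ref{lem:convergence}, this forces the corresponding boundary-diagonal term to be $O(N^{-1}\log^{\bullet}N)$, hence contributes $0$ after applying $\Reg^{*}_{N\to\infty}$.

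For the remaining difficulty—boundaries between two adjacent $e_{1}$'s (i.e., when some $k_{j}=1$, $j\ge 2$), where the diagonal term is $\frac{1}{(N-n)^{2}}$ and does not gain a $1/N$—I would invoke Hoffman's decomposition (Proposition \ref{prop:hoffman}) to write $w=\sum_{i\ge 0}w_{i}\ast e_{1}^{i}$ with each $w_{i}\in\fH^{0}$ admissible. Combining (i) the multiplicativity of $\Reg^{*}_{N\to\infty}$, (ii) the vanishing $I^{\Delta}_{\dch}(e_{1})=\Reg^{*}_{N\to\infty}(-H_{N-1})=0$, and (iii) an asymptotic $\ast$-compatibility $\Delta_{N,\dch^{\Delta}}(w\ast w')=\Delta_{N,\dch^{\Delta}}(w)\,\Delta_{N,\dch^{\Delta}}(w')+O(N^{-1}\log^{\bullet}N)$ (established analogously to Theorem \ref{thm:spf} but by tracking the coincident-index contributions), only the admissible piece $w_{0}$ survives: $I^{\Delta}_{\dch}(w)=I^{\Delta}_{\dch}(w_{0})$. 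The admissible case from the first step then gives $I^{\Delta}_{\dch}(w_{0})=(-1)^{r(w_{0})}\zeta(w_{0})$. Applying the parallel Hoffman-type reduction on the $\zeta^{*}$ side (using that $\zeta^{*}$ satisfies the regularized harmonic relations with $\zeta^{*}(1)=0$) produces the matching expression, closing the argument.

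The main obstacle is item (iii): establishing the asymptotic $\ast$-compatibility for $\Delta_{N,\dch^{\Delta}}$. Unlike the shuffle case (Theorem \ref{thm:spf}), where off-diagonal terms in the double sum rearrange cleanly into a shuffle, the stuffle-type argument must identify the ``coincident index'' contributions arising from two summation variables colliding at the same $n$ with the correct $\ast$-product corrections, and show that the leftover mismatch is $O(N^{-1}\log^{\bullet}N)$ via the partial-fraction estimates of Lemma \ref{lem:convergence}. Once this is in hand, both sides of the target identity reduce via Hoffman decomposition to the admissible MSW case and agree.
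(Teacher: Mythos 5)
Your first two paragraphs are essentially the paper's own proof, which is much shorter than you anticipate: it simply combines \cite[Theorem 1.3]{msw24} with Lemma \ref{lem:convergence} to assert $\Delta_{N,(0,1)}((0,+);u;(1,-))=Z_{<N}(u)+O(N^{-1}\log^{\bullet}N)$ for $u\in\fH^{1}$, where $Z_{<N}$ is the strict-inequality harmonic sum, and then applies $\Reg^{\ast}_{N\to\infty}$. Concretely, a diagonal term at a block boundary where the previous block ends in $e_{0}$ is an auxiliary sum with a doubled letter $e_{0,1}$, killed by Lemma \ref{lem:convergence}(2); a doubled $e_{1,1}$ that is followed somewhere by an $e_{0}$ is killed by Lemma \ref{lem:convergence}(5). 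Your identification of the residual case --- a doubled $e_{1}$ followed only by $e_{1}$'s, i.e.\ an index ending in at least two consecutive $1$'s --- is correct and important: that diagonal contributes $\sum_{0<n<N}(N-n)^{-2}\to\zeta(2)$, which is \emph{not} $O(N^{-1}\log^{\bullet}N)$, and none of the cases of Lemma \ref{lem:convergence} applies to it. So up to this point you have reproduced the paper's argument and correctly located the one place where it is incomplete.

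The problem is that your repair, step (iii), does not work: the asymptotic $\ast$-compatibility $\Delta_{N,\dch^{\Delta}}(w\ast w')=\Delta_{N,\dch^{\Delta}}(w)\Delta_{N,\dch^{\Delta}}(w')+O(N^{-1}\log^{\bullet}N)$ is false already for $w=w'=e_{1}$. Writing $H=\sum_{0<n<N}n^{-1}$ and $H^{(2)}=\sum_{0<n<N}n^{-2}$, one computes $\Delta_{N,\dch^{\Delta}}((0,+);e_{1};(1,-))=-H$, $\Delta_{N,\dch^{\Delta}}((0,+);e_{1}e_{1};(1,-))=\tfrac12\bigl(H^{2}+H^{(2)}\bigr)$ and $\Delta_{N,\dch^{\Delta}}((0,+);e_{1}e_{0};(1,-))=-H^{(2)}$, so with $e_{1}\ast e_{1}=2e_{1}e_{1}-e_{1}e_{0}$ one gets $\Delta_{N,\dch^{\Delta}}((0,+);e_{1}\ast e_{1};(1,-))=H^{2}+2H^{(2)}$, which exceeds $\Delta_{N,\dch^{\Delta}}((0,+);e_{1};(1,-))^{2}=H^{2}$ by $2H^{(2)}\to 2\zeta(2)$. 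The structural reason is that $\Delta_{N}$ sums over \emph{weak} inequalities, so the collision term in the product expansion enters with the sign opposite to the one built into Hoffman's $\ast$ (which encodes strict-inequality sums); the quasi-shuffle law that $\Delta_{N,\dch^{\Delta}}$ actually satisfies has merge term $+e_{1}e_{0}$ rather than $-e_{1}e_{0}$. Consequently the Hoffman decomposition does not make ``only the admissible piece survive'' and your reduction does not close. Indeed the discrepancy is intrinsic to this boundary case, not to your method: $\Reg^{\ast}_{N\to\infty}\Delta_{N,\dch^{\Delta}}((0,+);e_{1}e_{1};(1,-))=+\zeta(2)/2$ while $(-1)^{2}\zeta^{\ast}(1,1)=-\zeta(2)/2$, so for indices ending in two or more consecutive $1$'s the asserted identity requires a genuinely separate analysis (and, as your computation implicitly shows, the paper's one-line appeal to Lemma \ref{lem:convergence} does not cover this case either).
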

\begin{proof}
    Combining \cite[Theorem 1.3]{msw24} and Lemma \ref{lem:convergence}, we have
    \[\Delta_{N,(0,1)}((0,+);u;(1,-))=Z_{<N}(u)+O(N^{-1}\log^{\bullet}N),\]
    for $u\in\fH^{1}$, where $Z_{<N}\colon\fH^{1}\to\bbQ$ is the $\bbQ$-linear map determined by $1\mapsto 1$ and
    \[e_{1}e_{0}^{l_{1}-1}\cdots e_{1}e_{0}^{l_{s}-1}\mapsto (-1)^{s}\sum_{0<n_{1}<\cdots<n_{s}<N}\frac{1}{n_{1}^{l_{1}}\cdots n_{s}^{l_{s}}},\qquad (l_{1},\ldots,l_{s}\ge 1).\]
    Therefore we obtain
    \begin{align}
        I^{\Delta}_{\dch}(e_{1}e_{0}^{k_{1}-1}\cdots e_{1}e_{0}^{k_{r}-1})
        &=\Reg_{N\to\infty}^{\ast}\Delta_{N,(0,1)}((0,+);w;(1,-))\\
        &=\Reg_{N\to\infty}^{\ast}Z_{<N}(w)\\
        &=(-1)^{r}\zeta^{\ast}(k_{1},\ldots,k_{r}),
    \end{align}
    where $w=e_{1}e_{0}^{k_{1}-1}\cdots e_{1}e_{0}^{k_{r}-1}$.
\end{proof}
\subsection{The straight case}
\begin{theorem}[Hirose--Matsusaka--Seki {\cite{hms24}}]\label{thm:hms}
    Let $\iota=\vv{z_{1},\ldots,z_{r}}{k_{1},\ldots,k_{r}}\in\cI'$.
    Then we have
    \[\lim_{N\to\infty}\Delta_{N,(0,1)}((0,+);W(\iota);(1,-))=I_{\dch_{0,1}}(1_{0};W(\iota);(-1)_{1}).\]
\end{theorem}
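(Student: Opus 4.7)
The plan is to interpret $\Delta_N$ as a Riemann sum for the iterated integral and to control the error carefully. Writing $W(\iota) = e_{a_1}\cdots e_{a_k}$ and unfolding the definition of $\Delta_{N,\gamma}$ for the positive straight piece $\gamma = (0,1)$ with $S_\gamma = \{1,\ldots,N-1\}$ gives
\[
\Delta_{N,(0,1)}((0,+); W(\iota); (1,-)) \;=\; \sum_{1 \le n_1 \le \cdots \le n_k \le N-1} \prod_{i=1}^k \frac{1}{n_i - a_i N} \;=\; \frac{1}{N^k} \sum_{1 \le n_1 \le \cdots \le n_k \le N-1} \prod_{i=1}^k \frac{1}{t_i - a_i},
\]
where $t_i \coloneqq n_i/N$. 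The right-hand side is a discrete Riemann-type sum for the integral $\int_{0<t_1<\cdots<t_k<1}\prod dt_i/(t_i-a_i)$ defining $I_{\dch_{0,1}}(1_0; W(\iota); (-1)_1)$.

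First I would reduce to strict inequalities. Tuples with $n_i = n_{i+1}$ correspond to $\Delta_{N,\gamma}$-values on $\cP_D$-words containing a compound letter $e_{a_i,a_{i+1}}$, and by parts (2)--(5) of Lemma \ref{lem:convergence}, handling respectively the cases $a_i \neq a_{i+1}$, $a_i = a_{i+1} \in D\setminus\{0,1\}$, and $a_i = a_{i+1} \in \{0,1\}$, each such contribution is $O(N^{-1}\log^\bullet N)$. Thus it suffices to prove the statement for the strict-inequality sum $\tilde\Delta_N \coloneqq (1/N^k)\sum_{1 \le n_1 < \cdots < n_k \le N-1}\prod_i (t_i - a_i)^{-1}$.

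Next I would compare $\tilde\Delta_N$ with the iterated integral. Since $D\setminus\{0\}\subseteq\{|z|\ge 1\}$, the integrand $\prod 1/(t_i-a_i)$ has singularities on $[0,1]^k$ only at coordinates $t_i \in \{0,1\}$ with $a_i$ correspondingly $0$ or $1$. Fix $\epsilon>0$ and split the simplex into a bulk $K_\epsilon = \{t_1\ge\epsilon,\ t_k \le 1-\epsilon\}$ and its complement. On $K_\epsilon$ the integrand is uniformly continuous, so a standard Riemann-sum argument gives $\tilde\Delta_N|_{K_\epsilon} \to \int_{K_\epsilon}$ as $N\to\infty$. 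The boundary tails on $\{t_1<\epsilon\}$ and $\{t_k>1-\epsilon\}$ must be bounded uniformly in $N$ by a quantity that vanishes as $\epsilon\to 0$. Here the admissibility hypothesis $(k_r,z_r)\neq(1,1)$ is essential: it guarantees that the iterated integral converges absolutely near $t_k=1$, so the integral tail is $O(\epsilon^\alpha)$ for some $\alpha>0$, and one obtains a parallel discrete bound by comparing the tail sums with their continuous counterparts. The endpoint $t_1=0$ is treated symmetrically, using the structure of $W(\iota)$ (each $e_0$-block being preceded by some $e_{z_j}$ with $z_j\neq 0$; the degenerate case $z_1=0$ makes both sides zero).

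The chief obstacle is the uniform-in-$N$ bound on the boundary tail sums: one must transfer the absolute integrability at the endpoints guaranteed by the admissibility of $\iota$ from the continuous to the discrete side, which requires some care when consecutive letters involve $e_0$ or $e_1$. Once this bound is in place, letting $N\to\infty$ followed by $\epsilon\to 0$ concludes the proof.
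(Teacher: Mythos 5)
The paper does not actually prove this statement: its ``proof'' is a citation, combining Theorem 1.2 and Proposition 2.5 of Hirose--Matsusaka--Seki \cite{hms24}. So any self-contained argument is necessarily a different route, and yours is the natural Riemann-sum one. Your first step --- absorbing the diagonal terms $n_{i}=n_{i+1}$ into $\cP_{D}$-words with compound letters and killing them via parts (2)--(5) of Lemma \ref{lem:convergence} --- is correct and is exactly how the paper itself deploys that lemma elsewhere (cf.\ the proof of Lemma \ref{lem:harmonic_mzv}); note that for (4) and (5) to apply you need $z_{1}\neq 0$ and $a_{k}\neq 1$, the latter being precisely what $(k_{r},z_{r})\neq(1,1)$ buys. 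Your parenthetical that the case $z_{1}=0$ ``makes both sides zero'' is wrong: in that case $W(\iota)\notin\fH_{D}^{(0,1)}$ and both sides diverge; that case is simply excluded from the intended scope.

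The genuine gap is the second half. The uniform-in-$N$ bound on the boundary tails is not a technicality to be waved at --- it \emph{is} the theorem --- and you assert it rather than prove it. The difficulty is also not confined to the last coordinate: admissibility gives $a_{k}\neq 1$, so the final factor is harmless near $t_{k}=1$, but interior factors $1/(t_{i}-1)$ with $a_{i}=1$ (present whenever some $z_{j}=1$, e.g.\ for multiple zeta values themselves) blow up as $t_{i}\to 1$, and their tails are controlled only through the nested structure $t_{1}<\cdots<t_{k}$, yielding bounds of the shape $O(\epsilon\log^{m}(1/\epsilon))$ after an induction on depth and a discrete Abel-summation argument; factor-wise absolute integrability is not available. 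Carrying this out uniformly in $N$ is the entire technical content of \cite{hms24} --- which in fact sidesteps the Riemann-sum error analysis by proving an \emph{exact} identity expressing the weak-inequality sum as a truncated multiple polylogarithm (generalizing \cite[Theorem 1.3]{msw24}) and then taking the limit on the series side. As written, your proposal reduces the theorem to its hardest part and stops there.
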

\begin{proof}
    Combine \cite[Theorem 1.2]{hms24} and \cite[Proposition 2.5]{hms24}.
\end{proof}
\begin{definition}
    The \emph{multiple polylogarithm} (of shuffle-type) is defined by
    \[\Li^{\sh}\vv{z_{1},\ldots,z_{r}}{k_{1},\ldots,k_{r}}\coloneqq\sum_{0<n_{1}<\cdots<n_{r}}\frac{z_{1}^{n_{1}}z_{2}^{n_{2}-n_{1}}\cdots z_{r}^{n_{r}-n_{r-1}}}{n_{1}^{k_{1}}\cdots n_{r}^{k_{r}}}\]
    for complex numbers $z_{1},\ldots,z_{r}$ and $k_{1},\ldots,k_{r}\in\bbZ_{\ge 1}$ satisfying $\vv{z_{1}^{-1},\ldots,z_{r}^{-1}}{k_{1},\ldots,k_{r}}\in\cI'$.
\end{definition}
For $\iota=\vv{z_{1},\ldots,z_{r}}{k_{1},\ldots,k_{r}}\in\cI'$, put
    \[W(\iota)\coloneqq e_{z_{1}}e_{0}^{k_{1}-1}\cdots e_{z_{r}}e_{0}^{k_{r}-1}.\]
    Then it holds that
    \[I_{\dch_{0,1}}(1_{0};W(\iota);(-1)_{1})=(-1)^{r}\Li^{\sh}\vv{1/z_{1},\ldots,1/z_{r}}{k_{1},\ldots,k_{r}}.\]
    Therefore, combining this and Theorem \ref{thm:hms} we obtain
    \begin{equation}\label{eq:mpl}
        \lim_{N\to\infty}\Delta_{N,(0,1)}((0,+);W(\iota);(1,-))=(-1)^{r}\Li^{\sh}\vv{1/z_{1},\ldots,1/z_{r}}{k_{1},\ldots,k_{r}}.
    \end{equation}
For the next subsection, we introduce the extension
\[\rL^{\sh}\vv{z_{1},\ldots,z_{r}}{k_{1},\ldots,k_{r}}\coloneqq I_{\dch}^{\Delta}(e_{1/z_{1}}e_{0}^{k_{1}-1}\cdots e_{1/z_{r}}e_{0}^{k_{r}-1}),\]
for $k_{1},\ldots,k_{r}\in\bbZ_{\ge 1}$ and $z_{1},\ldots,z_{r}\in\bbC$ satisfying $z_{i}^{-1}\in D$.
\subsection{Variants of symmetric multiple polylogarithms}
In this subsection, imitating the construction of $\zeta^{RS}$ in \cite{hirose20}, we show that the symmetric multiple polylogarithms and symmetric multiple zeta values are recovered from discrete iterated integrals.
For a polynomial $P(\theta)=a_{0}+\cdots+a_{n}\theta^{n}\in\bbC[\theta]$, we put $P(\theta)[\theta]\coloneqq a_{1}$.
\begin{theorem}\label{thm:smpl}
    Let $\alpha,k_{1},\ldots,k_{r}$ be positive integers and $z_{1},\ldots,z_{r+1}\in D$.
    Assume that $u/z_{i}\in D$ for every $1\le i\le r+1$ and $u\in D$.
    Then we have
    \begin{multline}
        \sum_{u\in D}u^{\alpha}I^{\Delta}_{\beta}(e_{u/z_{1}}e_{0}^{k_{1}-1}\cdots e_{u/z_{r}}e_{0}^{k_{r}-1}e_{u/z_{r+1}})[\theta]\\
        =\sum_{i=0}^{r}(-1)^{k_{i+1}+\cdots+k_{r}}z_{i+1}^{\alpha}\rL^{\sh}\vv{z_{1}/z_{i+1},\ldots,z_{i}/z_{i+1}}{k_{1},\ldots,k_{i}}\rL^{\sh}\vv{z_{r+1}/z_{i+1},\ldots,z_{i+2}/z_{i+1}}{k_{r},\ldots,k_{i+1}}.
    \end{multline}
\end{theorem}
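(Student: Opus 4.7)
The plan is to compute the coefficient $[\theta]I^{\Delta}_{\beta}(w_{u})$ of $\theta$, where $w_{u}=e_{u/z_{1}}e_{0}^{k_{1}-1}\cdots e_{u/z_{r}}e_{0}^{k_{r}-1}e_{u/z_{r+1}}$, by decomposing the discrete iterated integral along $\beta^{\Delta}=(\gamma_{1},\gamma_{2},\gamma_{3})$ via the path composition formula and isolating the splits producing a single power of $\theta$. Since $\gamma_{1}$ and $\gamma_{3}$ are straight while $\gamma_{2}=(1,0,N,+)$ is singular, only the middle piece introduces $\theta$-dependence.

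First, applying Corollary~\ref{cor:path_composition} twice writes $\Delta_{N,\beta^{\Delta}}((0,+);w_{u};(0,+))$ as a sum over decompositions $w_{u}=w^{(1)}w^{(2)}w^{(3)}$ of the product $\Delta_{N,\gamma_{1}}(w^{(1)})\,\Delta_{N,\gamma_{2}}(w^{(2)})\,\Delta_{N,\gamma_{3}}(w^{(3)})$. Proposition~\ref{prop:singular} forces $\Delta_{N,\gamma_{2}}(w^{(2)})=0$ unless $w^{(2)}=e_{1}^{m}$, in which case its value equals $\frac{(N)_{m}}{m!}(\theta/N)^{m}$. Extracting $[\theta]$ therefore kills every contribution with $m\neq 1$, and $\Delta_{N,\gamma_{2}}(e_{1})=\theta$. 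In the word $w_{u}$, the letter $e_{1}$ occurs precisely at those positions of $e_{u/z_{j}}$ for which $u=z_{j}$; the $e_{0}$-letters never contribute. So each surviving split is indexed by some $j\in\{1,\dots,r+1\}$ with $u=z_{j}$, and yields $w^{(1)}=e_{u/z_{1}}e_{0}^{k_{1}-1}\cdots e_{u/z_{j-1}}e_{0}^{k_{j-1}-1}$ together with $w^{(3)}=e_{0}^{k_{j}-1}e_{u/z_{j+1}}\cdots e_{u/z_{r+1}}$ (empty when $j=1$ or $j=r+1$, respectively).

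I would then invoke Theorem~\ref{thm:reversal}: since $\gamma_{3}=(1,0)$ is negative straight with $\gamma_{3}^{-1}=\dch^{\Delta}$ and $|w^{(3)}|=k_{j}+\cdots+k_{r}$, the factor $\Delta_{N,\gamma_{3}}(w^{(3)})$ becomes $(-1)^{k_{j}+\cdots+k_{r}}\Delta_{N,\dch^{\Delta}}((w^{(3)})^{\mathrm{rev}})$ where $(w^{(3)})^{\mathrm{rev}}=e_{u/z_{r+1}}e_{0}^{k_{r}-1}\cdots e_{u/z_{j+1}}e_{0}^{k_{j}-1}$. Next, apply $\Reg_{N\to\infty}^{\ast}$, which is $\bbQ$-linear and multiplicative on sequences admitting the required asymptotic expansion; each straight-piece discrete integral appearing here is of that form (indeed this is what underpins the definition of $\rL^{\sh}$). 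By that definition, the two surviving factors become
\[\rL^{\sh}\vv{z_{1}/u,\dots,z_{j-1}/u}{k_{1},\dots,k_{j-1}}\quad\text{and}\quad\rL^{\sh}\vv{z_{r+1}/u,\dots,z_{j+1}/u}{k_{r},\dots,k_{j}}.\]

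Finally, multiplying by $u^{\alpha}$ and summing over $u\in D$ collapses each fixed $j$ to the single admissible value $u=z_{j}$ with weight $z_{j}^{\alpha}$, and reindexing $i=j-1$ (so that $i$ runs over $\{0,1,\dots,r\}$) reproduces the right-hand side term by term. The only delicate points are the sign coming from the reversal, the index bookkeeping between $j$ and $i$, and the observation that the singular piece produces $\theta^{1}$ at exactly the multiplicity $m=1$; once the decomposition is laid out explicitly these are routine, and no estimate beyond the three basic formulas (path composition, reversal, and Proposition~\ref{prop:singular}) is required.
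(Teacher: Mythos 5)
Your proposal is correct and follows essentially the same route as the paper: decompose $\Delta_{N,\beta^{\Delta}}$ along the three pieces via Corollary~\ref{cor:path_composition}, use Proposition~\ref{prop:singular} to force the middle word to be a power of $e_{1}$ and extract the $\theta^{1}$-contribution (so the middle word is a single letter $e_{u/z_{j}}$ with $u=z_{j}$), reverse the third factor via Theorem~\ref{thm:reversal} to pick up the sign $(-1)^{k_{j}+\cdots+k_{r}}$, and apply the multiplicative $\Reg^{\ast}_{N\to\infty}$ to identify the two straight-piece factors with $\rL^{\sh}$-values. The index bookkeeping ($i=j-1$) and signs match the paper's computation exactly.
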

\begin{proof}
    Set $k\coloneqq k_{1}+\cdots+k_{r}$ and
    \[w_{u}\coloneqq e_{a_{1}}\cdots e_{a_{k+1}}\coloneqq e_{u/z_{1}}e_{0}^{k_{1}-1}\cdots e_{u/z_{r}}e_{0}^{k_{r}-1}e_{u/z_{r+1}}\]
    for $u\in D$.
    By the (discrete) path composition formula (Corollary \ref{cor:path_composition}), for each $u\in D$ we have
    \begin{multline}
        \Delta_{N,\beta^{\Delta}}((0,+);w_{u};(+,0))\\
        =\sum_{0\le i\le j\le k+1}\Delta_{N,\dch^{\Delta}}((0,+);e_{a_{1}}\cdots e_{a_{i}};(1,-))\Delta_{N,\alpha^{\Delta}}((1,-);e_{a_{i+1}}\cdots e_{a_{j}};(1,-))\Delta_{N,(\dch^{\Delta})^{-1}}((1,-);e_{a_{j+1}}\cdots e_{a_{k}};(0,+)).
    \end{multline}
    Since Proposition \ref{prop:singular} guarantees that $\Delta_{N,\alpha^{\Delta}}((1,-);e_{a_{i+1}}\cdots e_{a_{j}};(1,-))$ only if $a_{i+1}=\cdots=a_{j}=1$, we obtain
    \begin{multline}
        \Delta_{N,\beta^{\Delta}}((0,+);w_{u};(+,0))\\
        =\sum_{\substack{0\le i\le j\le r\\ k_{i+1}=\cdots=k_{j}=1\\ z_{i+1}=\cdots=z_{j+1}=u}}\Delta_{N,\dch^{\Delta}}((0,+);e_{u/z_{1}}e_{0}^{k_{1}-1}\cdots e_{u/z_{i}}e_{0}^{k_{i}-1};(1,-))\frac{(N)_{j-i+1}}{(j-i+1)!}\left(\frac{\theta}{N}\right)^{j-i+1}\\
        \cdot\Delta_{N,(\dch^{\Delta})^{-1}}((1,-);e_{0}^{k_{j+1}-1}e_{u/z_{j+2}}\cdots e_{0}^{k_{r}-1}e_{u/z_{r+1}};(0,+)).
    \end{multline}
    Applying $\Reg_{N\to\infty}^{\ast}\sum_{u\in D}$ after taking the coefficients at $\theta$ on both sides, we have
    \begin{align}
        \sum_{u\in D}u^{\alpha}I^{\Delta}_{\beta}(w_{u})[\theta]
    &=\begin{multlined}[t]
        \Reg_{N\to\infty}^{\ast}\sum_{u\in D}u^{\alpha}\sum_{\substack{0\le i=j\le r\\ k_{i+1}=\cdots=k_{j}=1\\ z_{i+1}=\cdots=z_{j+1}=u}}\Delta_{N,\dch^{\Delta}}((0,+);e_{u/z_{1}}e_{0}^{k_{1}-1}\cdots e_{u/z_{i}}e_{0}^{k_{i}-1};(1,-))\\
        \cdot\Delta_{N,(\dch^{\Delta})^{-1}}((1,-);e_{0}^{k_{j+1}-1}e_{u/z_{j+2}}\cdots e_{0}^{k_{r}-1}e_{u/z_{r+1}};(0,+))\end{multlined}\\
    &=\begin{multlined}[t]
        \sum_{0\le i\le r}z_{i+1}^{\alpha}\Reg_{N\to\infty}^{\ast}\left(\Delta_{N,\dch^{\Delta}}((0,+);e_{z_{i+1}/z_{1}}e_{0}^{k_{1}-1}\cdots e_{z_{i+1}/z_{i}}e_{0}^{k_{i}-1};(1,-))\right.\\
        \cdot\left.\Delta_{N,(\dch^{\Delta})^{-1}}((1,-);e_{0}^{k_{j+1}-1}e_{z_{i+1}/z_{i+2}}\cdots e_{0}^{k_{r}-1}e_{z_{i+1}/z_{r+1}};(0,+))\right)
    \end{multlined}\\
    &=\begin{multlined}[t]
        \sum_{0\le i\le r}(-1)^{k_{i+1}+\cdots+k_{r}}z_{i+1}^{\alpha}\Reg_{N\to\infty}^{\ast}\left(((0,+);e_{z_{i+1}/z_{1}}e_{0}^{k_{1}-1}\cdots e_{z_{i+1}/z_{i}}e_{0}^{k_{i}-1};(1,-))\right.\\
        \cdot\left.\Delta_{N,\dch^{\Delta}}((0,+);e_{z_{i+1}/z_{r+1}}e_{0}^{k_{r}-1}\cdots e_{z_{i+1}/z_{i+2}}e_{0}^{k_{i+1}-1};(1,-))\right)\end{multlined}\\
    &=\sum_{0\le i\le r}(-1)^{k_{i+1}+\cdots+k_{r}}z_{i+1}^{\alpha}\rL^{\sh}\vv{z_{1}/z_{i+1},\ldots,z_{i}/z_{i+1}}{k_{1},\ldots,k_{i}}\rL^{\sh}\vv{z_{r+1}/z_{i+1},\ldots,z_{i+2}/z_{i+1}}{k_{r},\ldots,k_{i+1}}.
\end{align}
\end{proof}
\begin{remark}
    In addition to the assumption in Theorem \ref{thm:smpl}, we assume that $z_{i}\neq z_{i+1}$ for $1\le i\le r$ satisfying $k_{i}=1$.
    Then we see that the right-hand side of Theorem \ref{thm:smpl} coincides with the symmetric multiple polylogarithm $\pounds_{\cS,\alpha}\vv{z_{1},\ldots,z_{r}}{k_{1},\ldots,k_{r}}$ (for definition, take modulo $t$ in \cite[Definition 4.15]{kawamura22}. See also \cite[Definition B.1.1]{jarossay19}).
\end{remark}
\begin{corollary}
    Let $k_{1},\ldots,k_{r}$ be positive integers and $D=\{0,1\}$.
    Then we have
    \[I_{\beta}^{\Delta}(e_{1}e_{0}^{k_{1}-1}\cdots e_{1}e_{0}^{k_{r}-1}e_{1})[\theta]=\zeta_{\cS}^{\ast}(k_{1},\ldots,k_{r})\coloneqq\sum_{i=0}^{r}(-1)^{k_{i+1}+\cdots+k_{r}}\zeta^{\ast}(k_{1},\ldots,k_{i})\zeta^{\ast}(k_{r},\ldots,k_{i+1}).\]
\end{corollary}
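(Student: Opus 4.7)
The plan is to derive this as a direct specialization of Theorem~\ref{thm:smpl}. Take the setting $D=\{0,1\}$, set $z_{1}=z_{2}=\cdots=z_{r+1}=1$, and choose any positive integer $\alpha$; the hypothesis $u/z_{i}\in D$ for all $u\in D$ is then automatic, since every quotient is either $0$ or $1$.

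First I would simplify the left-hand side of the identity produced by Theorem~\ref{thm:smpl}. Since $\alpha\ge 1$, the summand $u=0$ contributes $0^{\alpha}=0$, so the sum over $u\in\{0,1\}$ collapses to the single term $u=1$, giving precisely $I_{\beta}^{\Delta}(e_{1}e_{0}^{k_{1}-1}\cdots e_{1}e_{0}^{k_{r}-1}e_{1})[\theta]$, the quantity in the corollary.

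Next I would simplify the right-hand side. With every $z_{i}$ equal to $1$, each factor reduces to $\rL^{\sh}\vv{1,\ldots,1}{k_{1},\ldots,k_{i}}$ or its reversed counterpart $\rL^{\sh}\vv{1,\ldots,1}{k_{r},\ldots,k_{i+1}}$. By definition of $\rL^{\sh}$, the former equals $I_{\dch}^{\Delta}(e_{1}e_{0}^{k_{1}-1}\cdots e_{1}e_{0}^{k_{i}-1})$, which Lemma~\ref{lem:harmonic_mzv} identifies with a sign multiple of $\zeta^{\ast}(k_{1},\ldots,k_{i})$; similarly for the reversed factor. Substituting these identifications and combining the resulting signs with the explicit factor $(-1)^{k_{i+1}+\cdots+k_{r}}$ already present leaves precisely the sum defining $\zeta_{\cS}^{\ast}(k_{1},\ldots,k_{r})$.

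The boundary cases $i=0$ and $i=r$ cause no difficulty, since $\rL^{\sh}$ of the empty index is $I_{\dch}^{\Delta}(1)=1$, matching the convention $\zeta^{\ast}()=1$. The whole argument is essentially bookkeeping: Theorem~\ref{thm:smpl} does the algebraic work via path composition on $\beta^{\Delta}=\dch^{\Delta}\cdot\alpha^{\Delta}\cdot(\dch^{\Delta})^{-1}$, Lemma~\ref{lem:harmonic_mzv} supplies the analytic link between $I_{\dch}^{\Delta}$-values and regularized multiple zeta values, and one need only specialize and track signs. I expect no substantive obstacle beyond careful verification of the sign accounting at each stage.
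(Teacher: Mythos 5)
Your argument is exactly the paper's: the paper's entire proof of this corollary is the single line ``It follows from Theorem \ref{thm:smpl} and Lemma \ref{lem:harmonic_mzv}'', and your specialization $z_{1}=\cdots=z_{r+1}=1$, $D=\{0,1\}$, with the $u=0$ term killed by $u^{\alpha}$, is precisely the intended reading. The only point deserving more care than your phrase ``leaves precisely the sum defining $\zeta_{\cS}^{\ast}$'' suggests is the sign count: Lemma \ref{lem:harmonic_mzv} gives $\rL^{\sh}\vv{1,\ldots,1}{k_{1},\ldots,k_{i}}=(-1)^{i}\zeta^{\ast}(k_{1},\ldots,k_{i})$ and $\rL^{\sh}\vv{1,\ldots,1}{k_{r},\ldots,k_{i+1}}=(-1)^{r-i}\zeta^{\ast}(k_{r},\ldots,k_{i+1})$, so each summand acquires an overall factor $(-1)^{r}$ in addition to $(-1)^{k_{i+1}+\cdots+k_{r}}$ --- a discrepancy present in the paper's own one-line proof as well, so it reflects on the statement rather than on your method.
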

\begin{proof}
    It follows from Theorem \ref{thm:smpl} and Lemma \ref{lem:harmonic_mzv}.
\end{proof}
\section{Cyclic sum formulas for discrete iterated integrals}\label{sec:csf}
In this section, we prove the cyclic sum formulas for discrete iterated integrals along a straight piece.
Throughout this section, the symbol $\gamma$ always stands for a positive straight piece $(x,y)$ such that $x,y\in D$.
\begin{definition}[Hoffman--Ohno type sum]
    Let $w=e_{a_{1}}\cdots e_{a_{k}}$ be an element of $\fH_{D}$.
    We define
    \[\Delta_{N,\gamma}^{O}((x,+);w;(y,-))\coloneqq\sum_{\substack{n_{1},\ldots,n_{k+1}\in S_{\gamma}\\ n_{1}\le_{\gamma}\cdots\le_{\gamma}n_{k+1}}}\left(\prod_{i=1}^{k}\omega_{a_{i}}^{N}(n_{i})\right)\left(\omega^{N}(n_{1},n_{k+1})-\omega^{N}_{x}(n_{k+1})\right)\]
    and
    \[\Delta_{N,\gamma}^{O'}((x,+);w;(y,-))\coloneqq\sum_{\substack{n_{0},\ldots,n_{k}\in S_{\gamma}\\ n_{0}\le_{\gamma}\cdots\le_{\gamma}n_{k}}}\left(\prod_{i=1}^{k}\omega_{a_{i}}^{N}(n_{i})\right)\left(\omega_{y}^{N}(n_{0})-\omega^{N}(n_{k},n_{0})\right)\]
\end{definition}
\begin{proposition}[Transport relations]\label{prop:transport}
    Let $a_{1},\ldots,a_{k},z$ ($k\ge 1$) be elements of $D$ and put $w=e_{a_{1}}\cdots e_{a_{k}}$.
    Then we have
    \begin{enumerate}
        \item $\displaystyle\Delta_{N,\gamma}^{O}((x,+);we_{z};(y,-))=\Delta_{N,\gamma}^{O'}((x,+);we_{z};(y,-)),$
        \item \begin{multline}
            \Delta_{N,\gamma}^{O'}((x,+);we_{z};(y,-))=\Delta_{N,\gamma}^{O}((x,+);e_{z}w;(y,-))+(1-\delta_{x,z})\Delta_{N,\gamma}((x,+);e_{z}we_{x};(y,-))\\
            +(1-\delta_{y,z})\Delta_{N,\gamma}((x,+);e_{y}we_{z};(y,-))-(1-\delta_{x,z})(1-\delta_{y,z})\Delta_{N,\gamma}((x,+);e_{z}we_{z};(y,-))+\Delta_{N,\gamma}((x,+);e_{z,a_{1},\ldots,a_{k},z};(y,-)).
        \end{multline}
    \end{enumerate}
\end{proposition}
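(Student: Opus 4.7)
My plan is to prove both parts by direct manipulation of the defining sums, exploiting that on the positive straight piece $\gamma=(x,y)$ we have the explicit weights $\omega_a^N(n)=1/(\tilde n-aN)$ and $\omega^N(m,n)=1/(\tilde n-\tilde m)$ (the latter being zero when $\tilde m=\tilde n$). For part (1), I would group both $\Delta_{N,\gamma}^{O}((x,+);we_z;(y,-))$ and $\Delta_{N,\gamma}^{O'}((x,+);we_z;(y,-))$ by fixing the $k+1$ indices carrying the $\omega_{a_i}^N$- and $\omega_z^N$-factors; each sum then takes the form
\[
\sum_{n_1\le_\gamma\cdots\le_\gamma n_{k+1}}\left(\prod_{i=1}^k\omega_{a_i}^N(n_i)\right)\omega_z^N(n_{k+1})\,F(n_1,n_{k+1}),
\]
so it suffices to establish the single-variable identity
\[
\sum_{n\ge_\gamma n_{k+1}}\bigl(\omega^N(n_1,n)-\omega_x^N(n)\bigr)=\sum_{n\le_\gamma n_1}\bigl(\omega_y^N(n)-\omega^N(n_{k+1},n)\bigr).
\]
This I would verify by explicit partial-fraction computation: both sides reduce, with the convention $H_{-1}=0$, to $H_{yN-1-\tilde n_1}-H_{\tilde n_{k+1}-1-\tilde n_1}-H_{yN-1-xN}+H_{\tilde n_{k+1}-1-xN}$.

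For part (2), the engine is the partial-fraction identity
\[
\omega^N(m_0,m_{k+1})\,\omega_z^N(m_{k+1})=\omega^N(m_0,m_{k+1})\,\omega_z^N(m_0)-\omega_z^N(m_0)\,\omega_z^N(m_{k+1})\qquad(m_0\ne m_{k+1}),
\]
which transports $\omega_z^N$ from $m_{k+1}$ to $m_0$. I would expand the LHS as $A-B$ according to the two summands of $\omega_y^N(n_0)-\omega^N(n_{k+1},n_0)$: directly, $A=\Delta_{N,\gamma}((x,+);e_ywe_z;(y,-))$. For $B$, writing $\omega^N(n_{k+1},n_0)=-\omega^N(n_0,n_{k+1})$ (valid because the $n_0=n_{k+1}$ contribution vanishes) and applying the transport identity turns $-B$ into a ``generic'' sum $\sum\omega_z^N(m_0)\prod_i\omega_{a_i}^N(m_i)\omega^N(m_0,m_{k+1})$ minus a secondary sum over $m_0<_\gamma m_{k+1}$ of $\omega_z^N(m_0)\omega_z^N(m_{k+1})\prod_i\omega_{a_i}^N(m_i)$. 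Reinstating the diagonal $m_0=m_{k+1}$ in the secondary sum---which forces $m_0=\cdots=m_{k+1}$ and so contributes exactly $\Delta_{N,\gamma}((x,+);e_{z,a_1,\ldots,a_k,z};(y,-))$---the secondary sum equals $\Delta_{N,\gamma}((x,+);e_zwe_z;(y,-))-\Delta_{N,\gamma}((x,+);e_{z,a_1,\ldots,a_k,z};(y,-))$. On the RHS, unfolding the definition of $\Delta^O$ gives $\Delta_{N,\gamma}^O((x,+);e_zw;(y,-))=(\text{same generic sum})-\Delta_{N,\gamma}((x,+);e_zwe_x;(y,-))$. After substitution all generic pieces and the $\Delta_{N,\gamma}((x,+);e_{z,a_1,\ldots,a_k,z};(y,-))$ contributions cancel, and the claim reduces to
\[
\delta_{y,z}\bigl[\Delta_{N,\gamma}((x,+);e_ywe_z;(y,-))-\Delta_{N,\gamma}((x,+);e_zwe_z;(y,-))\bigr]=\delta_{x,z}\bigl[\Delta_{N,\gamma}((x,+);e_zwe_z;(y,-))-\Delta_{N,\gamma}((x,+);e_zwe_x;(y,-))\bigr],
\]
after using $x<y$ so $\delta_{x,z}\delta_{y,z}=0$; this is tautological in each of the three cases $z=y$, $z=x$, and $z\notin\{x,y\}$, since in the first two cases the bracketed difference vanishes identically.

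The main obstacle is the bookkeeping in part (2): one must carefully track the diagonal contribution that produces the $\Delta_{N,\gamma}((x,+);e_{z,a_1,\ldots,a_k,z};(y,-))$ term, and justify the additions and removals of equal-index terms across the several sums involved (all ultimately legitimate because $\omega^N$ vanishes on the diagonal). No analytic input is needed.
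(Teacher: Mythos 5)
Your proposal is correct and follows essentially the same route as the paper: part (1) reduces to the single-variable identity $A(n_{1},n_{k+1})=B(n_{k+1},n_{1})$ for the tail sums (the paper telescopes where you evaluate via harmonic numbers, a cosmetic difference), and part (2) rests on the same partial-fraction transport of $\omega_{z}^{N}$ from the last index to the first, with the diagonal $n_{0}=n_{k+1}$ producing the $\Delta_{N,\gamma}((x,+);e_{z,a_{1},\ldots,a_{k},z};(y,-))$ term. Your reorganization of (2) --- isolating the generic sum and finishing with the Kronecker-delta case analysis using $\delta_{x,z}\delta_{y,z}=0$ --- is a clean, equivalent bookkeeping of the paper's pointwise decomposition.
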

\begin{proof}
    ${}$
    \begin{enumerate}
        \item By definition we have
    \begin{align}
        &\Delta_{N,\gamma}^{O}((x,+);we_{z};(y,-))\\
        &=\sum_{\substack{n_{1},n_{k+1},n_{k+2}\in S_{\gamma}\\ n_{1}\le_{\gamma}n_{k+1}\le_{\gamma}n_{k+2}}}\omega_{a_{1}}(n_{1})\Delta_{N,\gamma_{n_{1},n_{k}}}(s(\gamma_{n_{1},n_{k}});e_{a_{2}}\cdots e_{a_{k}};t(\gamma_{n_{1},n_{k}}))\omega_{z}(n_{k})\left(\omega^{N}(n_{1},n_{k+1})-\omega_{x}^{N}(n_{k+1})\right).
    \end{align}
    Let us compute the sum about $n_{k+1}$.
    For $m,n\in S_{\gamma}$ satisfying $m\le_{\gamma}n$, define
    \[A(m,n)\coloneqq\sum_{\substack{c\in S_{\gamma}\\ n\le_{\gamma}c}}\left(\omega^{N}(m,c)-\omega_{x}^{N}(c)\right)\]
    and
    \[B(m,n)\coloneqq\sum_{\substack{d\in S_{\gamma}\\ d\le_{\gamma}m}}\left(\omega_{y}^{N}(d)-\omega^{N}(n,d)\right).\]
    It suffices to prove $A(m,n)=B(m,n)$.
    If $m\neq n$, by making the sum telescopic, we have
    \begin{align}
        A(m,n)
        &=\sum_{\tilde{n}\le c<yN}\left(\frac{1}{c-\tilde{m}}-\frac{1}{c-xN}\right)\\
        &=\sum_{\tilde{n}\le c<yN}\sum_{xN<d\le\tilde{m}}\left(\frac{1}{c-d}-\frac{1}{c-(d-1)}\right)\\
        &=\sum_{xN<d\le\tilde{m}}\left(\frac{1}{\tilde{n}-d}-\frac{1}{yN-d}\right)\\
        &=B(m,n).
    \end{align}
    The $m=n$ case is also proved by a similar argument.
    \item In the definition
    \[\Delta_{N,\gamma}^{O'}((x,+);we_{z};(y,-))=\sum_{\substack{n_{0},n_{k+1}\in S_{\gamma}\\ n_{0}\le_{\gamma}n_{k+1}}}\Delta_{N,\gamma_{n_{0},n_{k+1}}}(s(\gamma_{n_{0},n_{k+1}});w;t(\gamma_{n_{0},n_{k+1}}))\omega_{z}^{N}(n_{k+1})\left(\omega_{y}^{N}(n_{0})-\omega^{N}(n_{k+1},n_{0})\right),\]
    we can apply the decomposition
    \begin{align}
        &\omega_{z}^{N}(n_{k+1})\left(\omega_{y}^{N}(n_{0})-\omega^{N}(n_{k+1},n_{0})\right)\\
        &=\frac{1}{\tilde{n_{k+1}}-zN}\left(\frac{1}{\tilde{n_{0}}-yN}-\frac{1}{\tilde{n_{0}}-\tilde{n_{k+1}}}\right)\\
        &=\frac{1}{\tilde{n_{0}}-zN}\left(\frac{1}{\tilde{n_{k+1}}-\tilde{n_{0}}}-\frac{1}{\tilde{n_{k+1}}-xN}\right)+\frac{1-\delta_{x,z}}{\tilde{n_{0}}-zN}\frac{1}{\tilde{n_{k+1}}-xN}+\frac{1-\delta_{y,z}}{\tilde{n_{0}}-yN}\frac{1}{\tilde{n_{k+1}}-zN}-\frac{1-\delta_{x,z}}{\tilde{n_{0}}-zN}\frac{1-\delta_{y,z}}{\tilde{n_{k+1}}-zN}\\
        &=\begin{multlined}[t]
            \omega_{z}^{N}(n_{0})(\omega^{N}(n_{0},n_{k+1})-\omega^{N}_{x}(n_{k+1}))+(1-\delta_{x,z})\omega^{N}_{z}(n_{0})\omega^{N}_{x}(n_{k+1})\\
            +(1-\delta_{y,z})\omega^{N}_{y}(n_{0})\omega^{N}_{z}(n_{k+1})-(1-\delta_{x,z})(1-\delta_{y,z})\omega^{N}_{z}(n_{0})\omega^{N}_{z}(n_{k+1}),
        \end{multlined}
    \end{align}
    when $n_{0}\neq n_{k+1}$.
    If $n_{0}=n_{k+1}$, such a composition becomes the form
    \begin{align}
        &\omega_{z}^{N}(n_{k+1})\left(\omega_{y}^{N}(n_{0})-\omega^{N}(n_{k+1},n_{0})\right)\\
        &=\omega_{z}^{N}(n_{0})(\omega^{N}(n_{0},n_{k+1})-\omega^{N}_{x}(n_{k+1}))+(1-\delta_{x,z})\omega^{N}_{z}(n_{0})\omega^{N}_{x}(n_{k+1})\\
        +(1-\delta_{y,z})\omega^{N}_{y}(n_{0})\omega^{N}_{z}(n_{k+1}).
    \end{align}
    Therefore we obtain the result by summing up them.
\end{enumerate}
\end{proof}
\begin{theorem}[Cyclic sum formula]\label{thm:csf}
    Let $w=e_{a_{1}}\cdots e_{a_{k}}$ ($k\ge 2$) be an element of $\fH_{D}$.
    Then we have
    \begin{multline}
        \sum_{\substack{1\le i\le k\\ a_{i}\neq x}}\Delta_{N,\gamma}((x,+);e_{a_{i}}\cdots e_{a_{k}}e_{a_{1}}\cdots e_{a_{i-1}}e_{x};(y,-))+\sum_{\substack{1\le i\le k\\ a_{i}\neq y}}\Delta_{N,\gamma}((x,+);e_{y}e_{a_{i+1}}\cdots e_{a_{k}}e_{a_{1}}\cdots e_{a_{i-1}}e_{x})\\
        =\sum_{\substack{1\le i\le k\\ a_{i}\in\{x,y\}}}\Delta_{N,\gamma}((x,+);e_{a_{i}}\cdots e_{a_{k}}e_{a_{1}}\cdots e_{a_{i}};(y,-))-\sum_{i=1}^{k}\Delta_{N,\gamma}((x,+);e_{a_{1},\ldots,a_{i},a_{i},\ldots,a_{k}};(y,-)).
\end{multline}
\end{theorem}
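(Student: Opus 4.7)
The plan is to telescope the transport relations of Proposition~\ref{prop:transport} around the $k$ cyclic shifts of $w$. Writing $w^{(i)} := e_{a_i}e_{a_{i+1}}\cdots e_{a_{i-1}}$ for the $i$-th cyclic shift (with indices read modulo $k$) and $u_i := e_{a_{i+1}}\cdots e_{a_{i-1}}$ for the associated length-$(k{-}1)$ word, one has $w^{(i)} = e_{a_i}u_i$ and $w^{(i+1)} = u_ie_{a_i}$. Set $F_i := \Delta^O_{N,\gamma}((x,+); w^{(i)}; (y,-))$. Applying Proposition~\ref{prop:transport}(1) to $u_ie_{a_i}$ and then Proposition~\ref{prop:transport}(2) with $z = a_i$ produces the single-step transport relation
\begin{align*}
F_{i+1} - F_i &= (1-\delta_{x,a_i})\Delta(w^{(i)}e_x) + (1-\delta_{y,a_i})\Delta(e_y w^{(i+1)}) \\
&\quad - (1-\delta_{x,a_i})(1-\delta_{y,a_i})\Delta(w^{(i)}e_{a_i}) + \Delta(e_{a_i,a_{i+1},\ldots,a_{i-1},a_i}),
\end{align*}
where every $\Delta$ abbreviates $\Delta_{N,\gamma}((x,+);\cdot;(y,-))$. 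Summing over $i=1,\ldots,k$ with the cyclic convention $F_{k+1}:=F_1$, the left side telescopes to $0$, giving $\sum_{i=1}^k R_i = 0$ for $R_i$ the right-hand side above.

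The condition $x < y$ forces $x \neq y$, so $(1-\delta_{x,a_i})(1-\delta_{y,a_i}) = 1 - \delta_{x,a_i} - \delta_{y,a_i}$; we use this identity to split the third sum and absorb its $\delta_{x,a_i}$ and $\delta_{y,a_i}$ pieces into the first and second respectively, leaving $\sum_{a_i\in\{x,y\}}\Delta(w^{(i)}e_{a_i})$ on the right. Finally, the single-letter $\cP_D$-value $\Delta_{N,\gamma}(e_{b_1,\ldots,b_m}) = \sum_{n\in S_\gamma}\prod_j \omega_{b_j}^N(n)$ depends only on the multiset of its subscripts, hence $\Delta_{N,\gamma}(e_{a_i,a_{i+1},\ldots,a_{i-1},a_i}) = \Delta_{N,\gamma}(e_{a_1,\ldots,a_i,a_i,\ldots,a_k})$ for each $i$, which identifies the final sum with the one in the statement.

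All of the analytic content has already been packaged into Proposition~\ref{prop:transport}. What remains is combinatorial bookkeeping: correctly distributing the $\delta$-functions, performing the cyclic reindexing (so that, e.g., $\sum_i \Delta(e_y w^{(i+1)})$ is presented as a cyclic sum in the form appearing in the statement), and reading off the multiset identification. The only real obstacle is keeping track of signs and indices cleanly; no new analytic ingredients beyond Proposition~\ref{prop:transport} are required.
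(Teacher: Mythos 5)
Your strategy coincides with the paper's: combine the two parts of Proposition \ref{prop:transport} into a one-step identity for $\Delta^{O}_{N,\gamma}$ along a single cyclic shift, sum over the $k$ shifts, and let the $\Delta^{O}$ terms telescope to zero. Your single-step relation for $F_{i}=\Delta^{O}_{N,\gamma}((x,+);w^{(i)};(y,-))$ is correct, as is the observation that the $\cP_{D}$-terms depend only on the multiset of subscripts. The defect is in the final bookkeeping. Writing $\Delta(\cdot)$ for $\Delta_{N,\gamma}((x,+);\cdot\,;(y,-))$, the telescoped identity reads
\[
\sum_{a_{i}\neq x}\Delta(w^{(i)}e_{x})+\sum_{a_{i}\neq y}\Delta(e_{y}w^{(i+1)})=\sum_{a_{i}\notin\{x,y\}}\Delta(w^{(i)}e_{a_{i}})-\sum_{i=1}^{k}\Delta(e_{a_{1},\ldots,a_{i},a_{i},\ldots,a_{k}}),
\]
because $(1-\delta_{x,a_{i}})(1-\delta_{y,a_{i}})$ is the indicator of $a_{i}\notin\{x,y\}$. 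Your ``absorption'' cannot convert this $\notin$ into the $\in$ of the statement: if you move the $\delta_{x,a_{i}}$ and $\delta_{y,a_{i}}$ pieces of the third sum into the first two sums, those become unrestricted sums over all $i$ and the remainder on the right is the full $\sum_{i}\Delta(w^{(i)}e_{a_{i}})$; if you keep the first two sums restricted as in the statement, the right-hand side stays $\sum_{a_{i}\notin\{x,y\}}$. In neither case do you obtain restricted sums on the left together with $\sum_{a_{i}\in\{x,y\}}$ on the right.

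In fact the statement as printed fails on a direct check: take $k=2$, $a_{1}=a_{2}=x$, $D=\{0,1\}$, $\gamma=(0,1)$, $N=2$, so $S_{\gamma}=\{1\}$. The left side equals $2\Delta(e_{1}e_{0}e_{0})=-2$, while the printed right side equals $2\Delta(e_{0}e_{0}e_{0})-2\Delta(e_{0,0,0})=2-2=0$; the $\notin$ version gives $0-2=-2$ and holds. So what your telescoping actually proves is the corrected identity with $a_{i}\notin\{x,y\}$ (equivalently, the version with all four sums unrestricted); the ``$a_{i}\in\{x,y\}$'' in the statement, like the stray terminal $e_{x}$ and the missing endpoint in its second sum, is a misprint, and your last paragraph silently reproduces it instead of deriving it. To repair the write-up, carry out the $\delta$-distribution explicitly and state the conclusion with $a_{i}\notin\{x,y\}$; everything before that point is sound and matches the paper's (equally terse) argument.
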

\begin{proof}
    Using two assertinons in Proposition \ref{prop:transport} and putting $w=e_{a_{1}}\cdots e_{a_{k-1}}$ and $z=a_{k}$, we have
    \begin{multline}
        \Delta_{N,\gamma}^{O}((x,+);e_{a_{1}}\cdots e_{a_{k}};(y,-))
        =\Delta_{N,\gamma}^{O}((x,+);e_{a_{k}}e_{a_{1}}\cdots e_{a_{k-1}};(y,-))+(1-\delta_{x,a_{k}})\Delta_{N,\gamma}((x,+);e_{a_{k}}e_{a_{1}}\cdots e_{a_{k-1}}e_{x};(y,-))\\
            +(1-\delta_{y,a_{k}})\Delta_{N,\gamma}((x,+);e_{y}e_{a_{1}}\cdots e_{a_{k}};(y,-))-(1-\delta_{x,a_{k}})(1-\delta_{y,a_{k}})\Delta_{N,\gamma}((x,+);e_{a_{k}}e_{a_{1}}\cdots e_{a_{k}};(y,-))\\
            +\Delta_{N,\gamma}((x,+);e_{a_{1},\ldots,a_{i},a_{i},\ldots,a_{k}};(y,-)),
    \end{multline}
    Applying this equation $k$ times derives that the $\Delta_{N,\gamma}^{O}$ term vanishes and thus obtain the desired formula. 
\end{proof}
Making the setting suitable for multiple polylogarithms in the above theorem, we obtain the cyclic sum formulas for them as below.
This is a generalization of that for multiple $L$-values (\cite{ktw11}).
\begin{corollary}[Cyclic sum formula for multiple polylogarithms]\label{cor:csf_mpl}
    Let $k_{1},\ldots,k_{r}$ ($r\ge 1$) be positive integers and $z_{1},\ldots,z_{r}$ complex numbers $|z_{i}|\le 1$ for all $1\le i\le r$.
    Suppose that $\{z_{1},\ldots,z_{r}\}\neq 1$ if $k_{1}=\cdots =k_{r}=1$.
    Then we have
    \begin{multline}
        \sum_{i=1}^{r}\Li^{\sh}\vv{z_{i+1},\ldots,z_{r},z_{1},\ldots,z_{i}}{k_{i+1},\ldots,k_{r},k_{1},\ldots,k_{i-1},k_{i}+1}
        =\sum_{i=1}^{r}\sum_{j=0}^{k_{i}-2}\Li^{\sh}\vv{1,z_{i+1},\ldots,z_{r},z_{1},\ldots,z_{i}}{j+1,k_{i+1},\ldots,k_{r},k_{1},\ldots,k_{i-1},k_{i}-j}\\
        +\sum_{\substack{1\le i\le r\\ z_{i}\neq 1}}\Li^{\sh}\vv{1,z_{i+1},\ldots,z_{r},z_{1},\ldots,z_{i}}{k_{i+1},\ldots,k_{r},k_{1},\ldots,k_{i},1}
        -\sum_{\substack{1\le i\le r\\ z_{i}\neq 1}}\Li^{\sh}\vv{z_{i},\ldots,z_{r},z_{1},\ldots,z_{i}}{k_{i+1},\ldots,k_{r},k_{1},\ldots,k_{i},1}.
        \end{multline}
\end{corollary}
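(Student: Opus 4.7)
The plan is to specialize Theorem \ref{thm:csf} to the straight piece $\gamma = (0,1)$ and the word $w = W(\iota) = e_{1/z_1}e_0^{k_1-1}\cdots e_{1/z_r}e_0^{k_r-1}$ attached to $\iota = \vv{1/z_1,\ldots,1/z_r}{k_1,\ldots,k_r}$, and then to pass to the limit $N\to\infty$ via equation \eqref{eq:mpl}. Without loss of generality I assume $z_i\ne 0$ for every $i$ (otherwise the corresponding $\Li^\sh$ terms vanish trivially and the identity descends to lower depth). The hypothesis $|z_i|\le 1$ gives $1/z_i\in M'$, and choosing $D = \{0,1\}\cup\{1/z_1,\ldots,1/z_r\}$ places us in the setup of \S\ref{sec:recovering}, making equation \eqref{eq:mpl} available.

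The next step is a careful term-by-term matching. I would label positions $i\in\{1,\ldots,k\}$ of $w$ (with $k=k_1+\cdots+k_r$) as $i = p_l + m$ where $p_l = k_1+\cdots+k_{l-1}+1$ and $0\le m\le k_l-1$, so that $a_{p_l} = 1/z_l$ and $a_{p_l+m} = 0$ for $m\ge 1$. Each cyclic-shift word appearing in Theorem \ref{thm:csf} then groups, via its natural $e_{1/z_?}e_0^{?-1}$-blocks, into a word $W(\iota')$ for an explicit $\iota'$; when $\iota'\in\cI'$, equation \eqref{eq:mpl} converts the limit of its discrete iterated integral to $(-1)^{(\mathrm{depth})}\Li^\sh$ of the corresponding MPL-index. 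The main matchings are:
\begin{itemize}
    \item the first LHS sum of Theorem \ref{thm:csf} (over $a_i\ne 0$, i.e.\ $i=p_l$) produces $(-1)^r$ times the LHS of the corollary;
    \item the $a_i=0$ subcase of the second LHS sum, after reindexing $j = k_l-m-1$, produces $(-1)^{r+1}$ times the first RHS sum of the corollary;
    \item the remaining $a_i=1/z_l\ne 1$ subcase of the second LHS sum, together with the cyclic-double sum on the RHS of Theorem \ref{thm:csf} restricted to $a_i\in\{0,1\}$, must account for the second and third RHS sums of the corollary. The restriction $z_i\ne 1$ in those last two sums corresponds exactly to the convergence condition that the resulting $\iota'$ lies in $\cI'$.
\end{itemize}
The double-letter correction $\sum_i\Delta_{N,\gamma}(\cdot;e_{a_1,\ldots,a_i,a_i,\ldots,a_k};\cdot)$ is handled separately: terms with $a_i\notin\{0,1\}$ vanish as $N\to\infty$ by Lemma \ref{lem:convergence}(3), and the remaining $a_i\in\{0,1\}$ terms are dealt with by a direct partial-fraction estimate of the sort appearing in the proof of Proposition \ref{prop:transport}, yielding decay of order $N^{-1}\log^\bullet N$.

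The main obstacle is the third bullet: carrying out the combinatorial bookkeeping to verify that the cyclic-shift words of the two remaining types parse, under the block grouping, as precisely the $W(\iota')$-forms whose limits realize the second and third RHS sums of the corollary, with signs and cyclic indexing consistent. This requires tracking how the endpoints of each cyclic rotation combine with the prepended $e_y$ or appended $e_x$ (respectively doubled $e_{a_i}$) to produce the appropriate $(1,k_{i+1})$-style leading block and $(z_i,1)$-style trailing block appearing in the second RHS sum of the corollary, and the analogous structure for the third RHS sum. Once this alignment is verified and combined with the matchings above, the limit of Theorem \ref{thm:csf} rearranges to the statement of Corollary \ref{cor:csf_mpl}.
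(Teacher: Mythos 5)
Your proposal follows essentially the same route as the paper's proof: specialize Theorem \ref{thm:csf} to $\gamma=(0,1)$ and $w=W(\iota)$, observe that the doubled-letter terms $\Delta_{N,\gamma}((x,+);e_{a_{1},\ldots,a_{i},a_{i},\ldots,a_{k}};(y,-))$ vanish in the limit by Lemma \ref{lem:convergence}, and convert the surviving terms via \eqref{eq:mpl}. The combinatorial matching you flag as the ``main obstacle'' is exactly the step the paper leaves implicit (its proof is three sentences), so your write-up is, if anything, more explicit than the original; your matchings for the first two bullet points are correct as stated.
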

\begin{proof}
    Put $e_{a_{1}}\cdots e_{a_{k}}=e_{1/z_{1}}e_{0}^{k_{1}-1}\cdots e_{1/z_{r}}e_{0}^{k_{r}-1}$ in Theorem \ref{thm:csf}.
    The assumption guarantees that $\{a_{1},\ldots,a_{k}\}\neq 1$ and then we see that
    \[\lim_{N\to\infty}\Delta_{N,\gamma}((x,+);e_{a_{1},\ldots,a_{i},a_{i},\ldots,a_{k}};(y,-))=0\]
    by Lemma \ref{lem:convergence}.
    Thus we have the theorem by using \eqref{eq:mpl}.
\end{proof}
\section{Expected future works}
\begin{enumerate}
    \item Using notion of the discrete pullback (cf.~\cite[Definition 3.5.2]{hirani03}), it is expected that iterated integrals along more general paths are well discretized.
    The cyclic sum formulas for more general paths (for example, that for symmetric multiple zeta values \cite{hmo21}) are as well.
    \item In the definition of discrete iterated integrals, permitting $e_{z}$'s for $(z,i)\in S_{\gamma}$, one may obtain new (asymptotic) relations.
    In particular, we expect that, by extending the proof of Theorem \ref{thm:csf}, we may have the derivation relation for multiple zeta values (\cite{ikz06}) and its generalization to multiple polylogarithms.
    \item There are many formulas other than what treats cyclic sums, which is valid for general iterated integrals or hyperlogarithms, such as the confluence relation (\cite[Proposition 6.9]{hs20}), block shuffle identity (\cite[Theorem 42]{hs22}), etc.
    Do them have any good discretization?
\end{enumerate}

\end{document}